\documentclass[12pt]{article}
\usepackage{amsmath,enumerate,amsfonts,amssymb,color,graphicx,amsthm}
\usepackage{multirow,caption,subcaption}
\usepackage{hyperref}
\usepackage{todonotes}
\usepackage[normalem]{ulem}
\usepackage{cite}

\setlength{\oddsidemargin}{0.25in}
\setlength{\textwidth}{6in}
\setlength{\topmargin}{-0.25in}
\setlength{\textheight}{8in}

\numberwithin{equation}{section}

\def\ep{{\epsilon}}

\def\p{{\partial}}

\newcounter{marnote}

\begin{document}
\newtheorem{lem}{Lemma}[section]
\newtheorem{rem}{Remark}
\newtheorem{question}{Question}
\newtheorem{prop}{Proposition}
\newtheorem{cor}{Corollary}
\newtheorem{thm}{Theorem}[section]
\newtheorem{definition}{Definition}[section]
\newtheorem{openproblem}{Open Problem}
\newtheorem{conjecture}{Conjecture}

\newenvironment{dedication}
  {
   \vspace*{\stretch{.2}}
   \itshape             
  }
  {
   \vspace{\stretch{1}} 
  }

\title
{Existence results for Toda systems with sign-changing prescribed functions: Part I
\thanks{
Email: sunll\underline{~}m@163.com (Linlin Sun); zhuxiaobao@ruc.edu.cn (Xiaobao Zhu)}}
\author{Linlin Sun\\
School of Mathematics and Computational Science\\
Xiangtan University\\
Xiangtan 411105, P. R. China\\
\\
Xiaobao Zhu\\
School of Mathematics\\
Renmin University of China\\
Beijing 100872, P. R. China\\
}

\date{ }
\maketitle


\begin{abstract}
  Let $(M,g)$ be a compact Riemann surface with area $1$, we shall study the Toda system
 \begin{align}\label{eq-jlw}
 \begin{cases}
 -\Delta u_1 = 2\rho_1(h_1e^{u_1}-1) - \rho_2(h_2e^{u_2}-1),\\
 -\Delta u_2 = 2\rho_2(h_2e^{u_2}-1) - \rho_1(h_1e^{u_1}-1),
 \end{cases}
 \end{align}
 on $(M,g)$ with $\rho_1=4\pi$, $\rho_2\in(0,4\pi)$, $h_1$ and $h_2$ are two smooth functions on $M$.
 In Jost-Lin-Wang's celebrated article (Comm. Pure Appl. Math., 59 (2006), no. 4, 526--558), they obtained
 a sufficient condition for the existence of Eq. \eqref{eq-jlw} when $h_1$ and $h_2$
 are both positive. In this paper, we shall improve this result to the case $h_1$ and $h_2$ can change signs.
 We shall pursue a variational method and use the standard blowup analysis. Among other things, the main contribution in our proof is to show the blowup can
 only happen at one point where $h_1$ is positive.
\end{abstract}

\setcounter {section} {0}

\section{Introduction}

Let $(M,g)$ be a compact Riemann surface with area $1$, $h_i\in C^\infty(M)$ and $\rho_i$ be positive constant for $i=1,2$.
The critical point $(u_1,u_2)$ of the functional
\begin{align*}
J_{\rho_1,\rho_2}(u_1,u_2) = \frac{1}{3}\int_M \left(|\nabla u_1|^2+\nabla u_1\nabla u_2+|\nabla u_2|^2\right)
+ \rho_1\int_M u_1 + \rho_2\int_M u_2
\end{align*}
on the Hilbert space
\begin{align*}
\mathcal{H} = \left\{(u_1,u_2)\in H^1(M)\times H^1(M):~\int_M h_1e^{u_1}=\int_M h_2e^{u_2}=1\right\}
\end{align*}
satisfies
\begin{align}\label{eq-toda-1}
 \begin{cases}
 -\Delta u_1 = 2\rho_1(h_1e^{u_1}-1) - \rho_2(h_2e^{u_2}-1),\\
 -\Delta u_2 = 2\rho_2(h_2e^{u_2}-1) - \rho_1(h_1e^{u_1}-1).
 \end{cases}
\end{align}
In the literal, people calls \eqref{eq-toda-1} as Toda system. It can be seen as the Frenet frame of holomorphic curves in $\mathbb{CP}^2$ (see \cite{G97})
in geometry, and also arises in physics in the study of the nonabelian Chern-Simons theory in the self-dual case,
when a scalar Higgs field is coupled to a gauge potential; see \cite{Dun95,Ya01,Tar08}.
One can easily find that Toda system \eqref{eq-toda-1} is a generalization of the mean field equation
\begin{align}\label{eq-mfe}
-\Delta u = \rho(he^u-1).
\end{align}
If $u$ is a solution of Eq. \eqref{eq-mfe}, then one has $\int_Mhe^u=1$. Therefore, people solves Eq. \eqref{eq-mfe} in Hilbert space
$$X=\left\{u\in H^1(M):\int_M he^u=1\right\}.$$
Since Eq. \eqref{eq-mfe} has a variational structure, thanks to the Moser-Trudinger inequality (c.f. \cite{Fon,DJLW97})
\begin{align*}
\log\int_M e^u\leq \frac{1}{16\pi}\int_M|\nabla u|^2+\int_M u+C,
\end{align*}
it has a minimal type solution in $X$ when $\rho\in(0,8\pi)$. However, when $\rho=8\pi$, the situation becomes subtle. The famous Kazdan-Warner problem \cite{KW74} states that
under what kind of condition on $h$, the equation
\begin{align}\label{eq-8pi}
-\Delta u = 8\pi(he^u-1)
\end{align}
has a solution. Necessarily, one needs $\max_Mh>0$. By using blowup argument and a variational method, Ding, Jost, Li and Wang \cite{DJLW97} attacked this problem successfully.
Assuming $h$ is positive, if
\begin{align}\label{cond-djlw}
\Delta\log h(p_0)+8\pi-2K(p_0)>0,
\end{align}
where $K$ is the Gauss curvature of $M$, $p_0$ is the maximum point of $2\log h(p)+A_p$ on $M$,
$A_p=\lim_{x\to p}\left(G_p(x)+4\log \text{dist}(x,p)\right)$ and $G_p$ is the Green function which satisfies
\begin{align*}
\begin{cases}
-\Delta G_p = 8\pi(\delta_p-1),\\
\int_MG_p=0,
\end{cases}
\end{align*}
then Eq. \eqref{eq-8pi} has a minimal type solution. Yang and the second author \cite{YZ17} generalized this existence result to the case $h$ is nonnegative.
With different arguments, the first author and Zhu \cite{SZ24} and the second author \cite{Z24} proved respectively the Ding-Jost-Li-Wang condition \eqref{cond-djlw}
is still sufficient for the existence of Eq. \eqref{eq-8pi} when $h$ changes signs. The mentioned works are all based on variational method. We remark that these results were
also obtained by using flow method \cite{LZ19,SZ21,WY22,LX22}.

To well understand the analytic properties of the Toda system,
Jost-Wang \cite{JW01} derived the Moser-Trudinger inequality for it:
\begin{align}\label{ineq-toda}
\inf_{(u_1,u_2)\in\mathcal{H}} J_{\rho_1,\rho_2} \geq -C~~~~\text{iff}~~~~\rho_1,\rho_2\in(0,4\pi].
\end{align}
From this inequality, we know that $J_{\rho_1,\rho_2}$ is coercive and hence attains its infimum when $\rho_1,\rho_2\in(0,4\pi)$. However,
when $\rho_1$ or $\rho_2$ equals $4\pi$, the existence problem also becomes subtle. In this paper, we shall put our attention on minimal type solution.
Hence, throughout this paper, we assume $\rho_i\leq4\pi$, $i=1,2$.

Let us review the existence result when one of $\rho_1$ and $\rho_2$ equals $4\pi$, which was obtained by Jost, Lin and Wang when $h_1$ and $h_2$ are both positive.

\begin{thm}[Jost-Lin-Wang\cite{JLW06}]
Let $(M,g)$ be a compact Riemann surface with Gauss curvature $K$. Let $h_1,h_2\in C^2(M)$ be two positive functions
 and $\rho_2\in(0,4\pi)$. Suppose that
\begin{align}\label{JLW-cond-1}
\Delta\log h_1(x)+(8\pi-\rho_2)-2K(x)>0,~~~\forall x\in M,
\end{align}
then $J_{4\pi,\rho_2}$ has a minimizer $(u_1,u_2)\in\mathcal{H}$ which satisfies
\begin{align}\label{eq-jlw-1}
 \begin{cases}
 -\Delta u_1 = 8\pi(h_1e^{u_1}-1) - \rho_2(h_2e^{u_2}-1),\\
 -\Delta u_2 = 2\rho_2(h_2e^{u_2}-1) - 4\pi(h_1e^{u_1}-1).
 \end{cases}
\end{align}
\end{thm}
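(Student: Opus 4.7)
The plan is to follow the variational-plus-blowup scheme introduced by Ding--Jost--Li--Wang for the scalar equation \eqref{eq-8pi} and adapted to the Toda system. Since the Jost--Wang inequality \eqref{ineq-toda} is sharp and fails to produce coercivity precisely when $\rho_1=4\pi$, the first step is subcriticalization in the first parameter: for each small $t\in(0,\epsilon_0)$, the functional $J_{4\pi-t,\rho_2}$ is coercive on $\mathcal{H}$ by \eqref{ineq-toda}, and therefore attains its infimum at some $(u_1^t,u_2^t)\in\mathcal{H}$ solving the corresponding perturbed system. The question then reduces to passing to the limit $t\to 0^+$.

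There is a dichotomy. If $\{(u_1^t,u_2^t)\}$ stays bounded in $H^1(M)\times H^1(M)$, then \eqref{ineq-toda} gives strong $L^p$ convergence of the exponential terms, and a subsequential weak limit lies in $\mathcal{H}$, minimizes $J_{4\pi,\rho_2}$, and solves \eqref{eq-jlw-1}. Otherwise a blowup analysis is needed. Since $\rho_2<4\pi$ is strictly subcritical, standard Brezis--Merle estimates keep $u_2^t$ uniformly bounded in $C^0$, while a Jost--Lin--Wang--Zhou type energy quantization for the coupled system forces $u_1^t$ to concentrate at a single point $p\in M$ with exponential mass exactly $4\pi$. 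Translating this concentration into energy produces a sharp lower bound
\begin{equation*}
\inf_{\mathcal{H}} J_{4\pi,\rho_2}\;\geq\;-4\pi\max_{p\in M}\bigl(2\log h_1(p)+A_p\bigr)-\Lambda(\rho_2,h_2),
\end{equation*}
for an explicit constant $\Lambda(\rho_2,h_2)$, which is the system analogue of the DJLW threshold.

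To rule out the blowup alternative, I would construct test functions $(\phi_\epsilon,\psi_\epsilon)$ concentrating at the maximizer $p_0$ of $2\log h_1(p)+A_p$. Choose $\phi_\epsilon$ to be the bubble
\begin{equation*}
\phi_\epsilon(x)=\log\frac{1}{\bigl(\epsilon^2+\dist(x,p_0)^2\bigr)^2}
\end{equation*}
on a small geodesic ball around $p_0$, glued through a smooth cut-off to a multiple of the Green function $G_{p_0}$ on the complement, with an additive constant ensuring $\int_M h_1 e^{\phi_\epsilon}=1$; take $\psi_\epsilon$ to be the unique (since $\rho_2<4\pi$) solution of the second equation of \eqref{eq-jlw-1} with $u_1=\phi_\epsilon$ frozen, normalized so that $\int_M h_2 e^{\psi_\epsilon}=1$. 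A careful $\epsilon$-expansion, using the definition of $A_{p_0}$, integration by parts against $G_{p_0}$, and the Gauss--Bonnet identity for the curvature contribution, shows that the leading correction of $J_{4\pi,\rho_2}(\phi_\epsilon,\psi_\epsilon)$ beyond the above threshold is
\begin{equation*}
-\bigl[\Delta\log h_1(p_0)+(8\pi-\rho_2)-2K(p_0)\bigr]\,\epsilon^2\log\tfrac{1}{\epsilon}+o\bigl(\epsilon^2\log\tfrac{1}{\epsilon}\bigr).
\end{equation*}
Hypothesis \eqref{JLW-cond-1} makes the bracketed coefficient strictly positive, so $J_{4\pi,\rho_2}$ dips strictly below the blowup threshold, contradicting the blowup scenario; hence $(u_1^t,u_2^t)$ must remain bounded and the minimizer exists.

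The main obstacle is the sharp blowup analysis for the coupled subcritical-to-critical limit: one must exclude bubble-tower and multi-point concentration, verify that the coupling term $-\rho_2(h_2e^{u_2}-1)$ in the first equation does not perturb the $4\pi$-quantization of $u_1^t$, and track the constant $\Lambda(\rho_2,h_2)$ with the precision required by the test-function comparison. The test-function expansion itself is lengthy but systematic once the ansatz is fixed, and condition \eqref{JLW-cond-1} is tailored precisely so that the $\epsilon^2\log(1/\epsilon)$ coefficient carries the sign needed to overcome the threshold.
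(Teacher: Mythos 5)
Your proposal follows essentially the same route as the paper: subcriticalize to $J_{4\pi-\epsilon,\rho_2}$, minimize, run the blowup dichotomy (single-point concentration of $u_1^\epsilon$ with $h_1\mu_1=\delta_{x_1}$ and uniform boundedness of $u_2^\epsilon$, the latter being straightforward here since $h_1,h_2>0$), derive the sharp lower bound for $\inf J_{4\pi,\rho_2}$ in terms of $2\log h_1+A_1$ and the Green-type function $G_2$, and beat it with a concentrating test pair whose $\epsilon^2\log(1/\epsilon)$ coefficient is $\Delta\log h_1(p)+(8\pi-\rho_2)-2K(p)$. The only cosmetic difference is your choice of the second test component (solving the frozen second equation rather than the explicit $-\tfrac12 w$ glued to $G_2$ as in the paper), which does not change the argument.
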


When $\rho_1=\rho_2=4\pi$ and both $h_1$ and $h_2$ are positive, we have
\begin{thm}[Li-Li\cite{LL05}, Jost-Lin-Wang\cite{JLW06}]\label{thm-critical-0}
Let $(M,g)$ be a compact Riemann surface with Gauss curvature $K$. Let $h_1,h_2\in C^2(M)$ be two positive functions. Suppose that
\begin{align}\label{JLW-cond-2}
\min\{\Delta\log h_1(x),\Delta\log h_2(x)\}+4\pi-2K(x)>0,~~~\forall x\in M,
\end{align}
then $J_{4\pi,4\pi}$ has a minimizer $(u_1,u_2)\in\mathcal{H}$ which satisfies
\begin{align*}
\begin{cases}
-\Delta u_1 = 8\pi(h_1e^{u_1}-1) - 4\pi(h_2e^{u_2}-1),\\
-\Delta u_2 = 8\pi(h_2e^{u_2}-1) - 4\pi(h_1e^{u_1}-1).
\end{cases}
\end{align*}
\end{thm}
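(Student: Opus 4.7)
The plan is to follow the Ding--Jost--Li--Wang strategy: approximate by subcritical parameters, analyse the blowup of the approximating minimizers, and rule out blowup via a test function computation using condition \eqref{JLW-cond-2}. For $\epsilon\in(0,4\pi)$ the Jost--Wang inequality \eqref{ineq-toda} makes $J_{4\pi-\epsilon,4\pi-\epsilon}$ coercive on $\mathcal{H}$, so the direct method yields a minimizer $(u_1^\epsilon,u_2^\epsilon)$ solving the corresponding subcritical Toda system. Letting $\epsilon\to 0$: if the family stays bounded in $H^1\times H^1$, weak convergence together with lower semicontinuity produces a minimizer of $J_{4\pi,4\pi}$; otherwise blowup occurs and must be ruled out.

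Next I would classify the blowup. Set the local masses
\[ \sigma_i(p)=\lim_{r\to 0}\lim_{\epsilon\to 0}(4\pi-\epsilon)\int_{B_r(p)}h_ie^{u_i^\epsilon}\,dv_g,\quad i=1,2. \]
Standard Brezis--Merle / Jost--Lin--Wang blowup analysis for the $SU(3)$ Toda system forces the blowup set to be finite and each local mass pair to lie in the discrete set $\{(4\pi,0),(0,4\pi),(4\pi,8\pi),(8\pi,4\pi),(8\pi,8\pi)\}$. Combined with the global mass balance $\sum_{p}\sigma_i(p)=4\pi$ enforced by $\rho_i=4\pi$ and the positivity of $h_1,h_2$, the only admissible configuration is a single blowup point $p_0$ with $(\sigma_1(p_0),\sigma_2(p_0))=(4\pi,4\pi)$: both components concentrate at the same point with identical mass.

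At this concentration point a refined Green-function expansion around $p_0$, together with Pohozaev-type identities and the classification of entire solutions of the limiting system on $\mathbb{R}^2$, gives a sharp lower bound of the form
\[ \liminf_{\epsilon\to 0}J_{4\pi-\epsilon,4\pi-\epsilon}(u_1^\epsilon,u_2^\epsilon)\geq -C_0-4\pi\bigl(2\log h_1(p_0)+A_{p_0}\bigr)-4\pi\bigl(2\log h_2(p_0)+A_{p_0}\bigr), \]
where $A_{p_0}$ is the regular part at $p_0$ of the relevant Green function and $C_0$ is an explicit universal constant. To contradict this I would construct a concentrating family of test functions $(\phi_1^s,\phi_2^s)\in\mathcal{H}$ peaked at a point $p\in M$, modelled on the standard bubbles $-2\log(s^2+|x|^2)$ in normal coordinates glued to Green-function tails, and expand $J_{4\pi,4\pi}(\phi_1^s,\phi_2^s)$ as $s\to 0$. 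The $O(1)$ correction picks up exactly the two quantities $\Delta\log h_i(p)+4\pi-2K(p)$, one from each component, so condition \eqref{JLW-cond-2} makes the upper bound strictly smaller than the blowup lower bound, yielding the desired contradiction.

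The main obstacle is the blowup analysis for the fully coupled $(4\pi,4\pi)$ case. In contrast to Jost--Lin--Wang's asymmetric situation, where $\rho_2<4\pi$ prevents the second component from concentrating and reduces the problem essentially to scalar mean-field analysis, here both components concentrate simultaneously and the cross-term $\nabla u_1\cdot\nabla u_2$ in $J_{\rho_1,\rho_2}$ must be tracked to leading order. Establishing that $(4\pi,4\pi)$ at a common point is the only admissible local mass profile (ruling out, e.g.\ $(4\pi,0)$ and $(0,4\pi)$ at two distinct points, which is incompatible with the coupled Euler--Lagrange equations when $h_1,h_2>0$) and matching the resulting sharp lower bound to the test function expansion is the central technical difficulty.
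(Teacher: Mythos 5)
First, a caveat: the paper does not actually prove Theorem \ref{thm-critical-0}; it is quoted from Li--Li \cite{LL05} and Jost--Lin--Wang \cite{JLW06} as background, and the authors explicitly defer its sign-changing generalization to a forthcoming paper. So your proposal can only be compared with the strategy of those references and with the analogous argument the paper gives in full for Theorem \ref{thm-zhu-1}. Your overall scheme (subcritical approximation by $J_{4\pi-\epsilon,4\pi-\epsilon}$, blowup analysis, Green-function lower bound, test functions exploiting \eqref{JLW-cond-2}) is the right one, but the central step of your blowup classification is wrong, and the error is visible inside your own text: you list the admissible local mass pairs as $\{(4\pi,0),(0,4\pi),(4\pi,8\pi),(8\pi,4\pi),(8\pi,8\pi)\}$ and then conclude that the only admissible configuration is $(\sigma_1,\sigma_2)=(4\pi,4\pi)$ at a common point --- a pair that does not belong to your own list. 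Since $\rho_1=\rho_2=4\pi$ caps each local mass at $4\pi$, the classification (Proposition 2.4 of \cite{JLW06}) in fact forces each blowup point to carry local mass $(4\pi,0)$ or $(0,4\pi)$: exactly one component concentrates at any given point while the other stays locally bounded there. Simultaneous concentration of both components at one point never occurs, so the scenario on which you base your sharp lower bound and your test-function matching is vacuous.

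This is not a cosmetic slip; it changes the shape of the whole argument. Because either component may be the one that concentrates, one must derive two lower bounds --- one of the form $-C_0-2\pi\bigl(2\log h_1(p)+A_1(p)\bigr)+\dots$ when $u_1^\epsilon$ blows up, and the symmetric one with $h_2$ and $A_2$ when $u_2^\epsilon$ blows up --- and then construct two families of test functions, each beating the corresponding bound. That is precisely why the hypothesis \eqref{JLW-cond-2} involves $\min\{\Delta\log h_1,\Delta\log h_2\}$: both single-component blowup scenarios must be excluded. Your additive lower bound containing both $2\log h_1(p_0)$ and $2\log h_2(p_0)$ has no counterpart in the actual analysis, and your suggestion that the configuration ``$(4\pi,0)$ and $(0,4\pi)$ at two distinct points'' is ``incompatible with the coupled Euler--Lagrange equations'' is unsubstantiated: single-component concentration is not ruled out structurally; it is the generic case and is excluded only by the curvature condition through the energy comparison. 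For the correct template, compare the paper's treatment of Theorem \ref{thm-zhu-1}, where Lemmas \ref{lem-x1} and \ref{lem-4pi} together with the subsequent lemma show that only $u_1^\epsilon$ concentrates (with $h_1\mu_1=\delta_{x_1}$ and $u_2^\epsilon\le C$), and the cross term is absorbed by the algebraic identity $\frac13\left(|\nabla u_1|^2+\nabla u_1\nabla u_2+|\nabla u_2|^2\right)=\frac14|\nabla u_1|^2+\frac13\left|\nabla\left(u_2+\tfrac12 u_1\right)\right|^2$ rather than by tracking a genuinely coupled double bubble.
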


We remark that Li-Li obtained Theorem \ref{thm-critical-0} when $h_1=h_2=1$ and Jost-Lin-Wang obtained it for general positive $h_1$ and $h_2$.

Motivated mostly by works in \cite{DJLW97, YZ17, SZ24,Z24}, we would like to release conditions \eqref{JLW-cond-1} and \eqref{JLW-cond-2} as much as possible.
Comparing with the sufficient conditions in \cite{DJLW97,YZ17,SZ24,Z24}, we believe that conditions \eqref{JLW-cond-1} and \eqref{JLW-cond-2} can release to
$h_i$ may change signs and the conditions only need hold on maximum points of the prescribed functions, namely $h_1$ and $h_2$. In the first step to
this aim, we are successful to release \eqref{JLW-cond-1} when $h_1$ and $h_2$ can change signs. Precisely, we have
\begin{thm}\label{thm-zhu-1}
Let $(M,g)$ be a compact Riemann surface with Gauss curvature $K$. Let $h_1, h_2\in C^2(M)$ which are positive somewhere and $\rho_2\in(0,4\pi)$. Denote $M_+=\{x\in M: h_1(x)>0\}$. Suppose that
 $$2\log h_1(p)+ A_1(p)=\max_{M_+}(2\log h_1(x)+A_1(x)),$$
where $A_1(p)$ is defined in \eqref{green-expression}. If
\begin{align}\label{zhu-cond-1}
\Delta\log h_1(p)+(8\pi-\rho_2)-2K(p)>0,
\end{align}
then $J_{4\pi,\rho_2}$ has a minimizer $(u_1,u_2)\in\mathcal{H}$ which satisfies \eqref{eq-jlw-1}.
\end{thm}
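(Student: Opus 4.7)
The plan is to use the variational plus blow-up scheme of Ding-Jost-Li-Wang \cite{DJLW97} and its sign-changing refinements \cite{SZ24,Z24}, adapted to the Toda system in the spirit of Jost-Lin-Wang \cite{JLW06}. For $\ep\in(0,\rho_2)$ small, set $\rho_1^\ep=4\pi-\ep$; by the Jost-Wang inequality \eqref{ineq-toda} and strict subcriticality in both parameters, $J_{\rho_1^\ep,\rho_2}$ is coercive on $\mathcal H$ and attains its infimum at some pair $(u_{1,\ep},u_{2,\ep})$ solving the corresponding subcritical system. If $\{u_{1,\ep}\}$ stays $L^\infty$-bounded, passing to the limit yields a minimizer of $J_{4\pi,\rho_2}$ and hence a solution of \eqref{eq-jlw-1}. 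Otherwise we aim for a contradiction with \eqref{zhu-cond-1} via sharp energy estimates.

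The heart of the argument is a refined blow-up analysis in the sign-changing regime. Since $\rho_2<4\pi$, standard concentration thresholds show that only $u_{1,\ep}$ can blow up; let $x_\ep\in M$ with $u_{1,\ep}(x_\ep)=\max_M u_{1,\ep}\to\infty$ and $x_\ep\to p_0$. The key claim, and the main novelty, is $p_0\in M_+$. If instead $h_1(p_0)\le 0$, then in a small disk $B_r(p_0)$ the integrand $h_1e^{u_{1,\ep}}$ is nonpositive, while Chen-Lin type estimates force $u_{1,\ep}$ to diverge to $-\infty$ on $M\setminus B_r(p_0)$ after suitable normalization; these two facts are incompatible with the constraint $\int_M h_1e^{u_{1,\ep}}=1$. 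Making this rigorous in the coupled Toda setting, where one must simultaneously control the cross-term $\rho_2(h_2e^{u_{2,\ep}}-1)$ in the first equation, is the principal technical obstacle, and I would adapt the sign-tracking arguments of \cite{SZ24,Z24} to the coupled system. Once $p_0\in M_+$ is secured, the standard Jost-Lin-Wang analysis gives simple blow-up: $u_{2,\ep}\to u_2^*$ in $C^2(M)$ and $u_{1,\ep}$ (appropriately normalized) converges to $-8\pi G_{p_0}$ in $C^2_{\rm loc}(M\setminus\{p_0\})$.

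These asymptotics, together with a local Pohozaev identity around $p_0$ and the Green function expansion $G_{p_0}(x)=-4\log\dist(x,p_0)+A_1(p_0)+o(1)$ from \eqref{green-expression}, produce the sharp lower bound
\begin{align*}
\liminf_{\ep\to 0} J_{\rho_1^\ep,\rho_2}(u_{1,\ep},u_{2,\ep}) \ge -4\pi-4\pi\log\pi-4\pi\bigl(2\log h_1(p_0)+A_1(p_0)\bigr)+E^*,
\end{align*}
where $E^*$ depends only on $u_2^*$. Against this, for each $p\in M_+$ I would construct a test pair $(\phi_{1,r},\phi_{2,r})\in\mathcal H$ in which $\phi_{1,r}$ is a standard Liouville bubble of scale $r$ at $p$ glued to $-4\pi G_p$ outside $B_r(p)$, and $\phi_{2,r}$ is a small perturbation of $u_2^*$ tuned so both integral constraints hold. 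Expanding $J_{4\pi,\rho_2}$ on this configuration in isothermal coordinates centered at $p$ yields
\begin{align*}
J_{4\pi,\rho_2}(\phi_{1,r},\phi_{2,r}) &= -4\pi-4\pi\log\pi-4\pi\bigl(2\log h_1(p)+A_1(p)\bigr)+E^* \\ &\quad -C\bigl[\Delta\log h_1(p)+(8\pi-\rho_2)-2K(p)\bigr]\,r^2\log\tfrac{1}{r}+o\bigl(r^2\log\tfrac{1}{r}\bigr)
\end{align*}
for some $C>0$. Picking $p$ to be the maximizer of $2\log h_1+A_1$ on $M_+$ guarantees $2\log h_1(p_0)+A_1(p_0)\le 2\log h_1(p)+A_1(p)$, so the leading terms align, and \eqref{zhu-cond-1} makes the bracketed coefficient strictly positive, giving $J_{4\pi,\rho_2}(\phi_{1,r},\phi_{2,r})<\liminf_{\ep\to 0}J_{\rho_1^\ep,\rho_2}(u_{1,\ep},u_{2,\ep})$ for $r$ small. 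This contradicts the blow-up scenario and proves the theorem.
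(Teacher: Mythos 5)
Your overall architecture (perturb to $J_{4\pi-\ep,\rho_2}$, minimize, analyze blow-up, derive a sharp lower bound at the concentration point, then beat it with a bubble-type test pair at the maximizer of $2\log h_1+A_1$ on $M_+$) is the same as the paper's. The genuine gap is in the sentence ``Since $\rho_2<4\pi$, standard concentration thresholds show that only $u_{1,\ep}$ can blow up.'' In the sign-changing setting this is precisely the step that is \emph{not} standard, and it is where the paper spends most of its effort (the lemmas on single-point concentration of $\mu_1$, on the atoms of $\gamma_2$, and on $u_2^\ep\le C$). The Brezis--Merle singular set is defined by $|\gamma_i(\{x\})|\ge 4\pi$ with $\gamma_2=2\rho_2h_2\mu_2-4\pi h_1\mu_1$; when $h_2$ changes sign, $e^{u_2^\ep}$ could a priori concentrate at a point $x_l\ne x_1$ with $h_2(x_l)<0$, producing a large \emph{negative} atom $\gamma_2(\{x_l\})\le-4\pi$ that puts $x_l$ into the singular set even though $\rho_2<4\pi$; and at $x_1$ itself one must separately rule out the cases $h_2(x_1)\mu_2(\{x_1\})>0$ and $<0$. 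The paper handles these by auxiliary comparison functions, local $L^s$ integrability of $e^{u_2^\ep}$, and a maximum-principle argument---there is no analogue of Jost--Lin--Wang's Proposition 2.4, which requires $h_i>0$. By contrast, you locate the ``principal technical obstacle'' in showing $p_0\in M_+$, which comes almost for free once $\mu_1$ is known to be a single Dirac mass: the constraint $\int_Mh_1e^{u_1^\ep}=1$ forces $h_1\mu_1=\delta_{x_1}$, hence $h_1(x_1)>0$. So the hard part of the theorem is not where you put it, and the step you wave through is the one that needs the new argument.

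Two smaller points. First, the paper's lower bound is obtained by the Ding--Jost--Li--Wang/Li--Li capacity argument (truncation of $u_1^\ep$ between its sup and inf on $\partial B_{Lr_1^\ep}$ plus harmonic comparison on the annulus), not by a Pohozaev identity; a local Pohozaev identity yields mass quantization but not, by itself, the sharp additive constant $-4\pi-4\pi\log\pi-2\pi\bigl(2\log h_1(x_1)+A_1(x_1)\bigr)+\frac{\rho_2}{2}\int_MG_2(h_2e^{G_2}+1)$ that the comparison with the test functions requires. Second, your coefficient $-4\pi\bigl(2\log h_1(p_0)+A_1(p_0)\bigr)$ should be $-2\pi\bigl(2\log h_1(p_0)+A_1(p_0)\bigr)$; this does not break the logic, since the same constant must reappear in the test-function expansion, but it indicates the neck estimate has not actually been carried out.
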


At the end of the introduction, we would like to mention some related works which deal with sign-changing potential in the critical case with respect to
Moser-Trudinger type inequalities (\cite{Mart09, YuZ24+}).
For the generalization of Theorem \ref{thm-critical-0}, we can also release the condition and
we shall give the details in a forthcoming paper.

The outline of the rest of the paper is following: In Sect. 2, we do some analysis on the minimizing sequence; In Sect. 3, we estimate the lower bound for
$J_{4\pi,\rho_2}$; Finally, we complete the proof of Theorem \ref{thm-zhu-1} in the last section. Throughout the whole paper,
the constant $C$ is varying from line to line and even in the same line, we do not distinguish
sequence and its subsequences since we just consider the existence result.

\section{Analysis on the minimizing sequence}
To show the functional $J_{4\pi,\rho_2}$ is bounded from below, we consider the perturbed functional $J_{4\pi-\ep,\rho_2}$. Since the infimum
of the functional $J_{4\pi-\ep,\rho_2}$ in $\mathcal{H}$ can be attained by $(u_1^\ep,u_2^\ep)$, we call $(u_1^\ep,u_2^\ep)$ the minimizing sequence and analysis
it in this section.

For $\rho_2\in(0,4\pi)$, in view of inequality \eqref{ineq-toda}, one knows for any $\ep\in(0,4\pi)$ there exists a $(u^\ep_1,u^{\ep}_2)\in\mathcal{H}$ such that
$J_{4\pi-\ep,\rho_2}(u_1^\ep,u_2^\ep)=\inf_{(u_1,u_2)\in\mathcal{H}} J_{4\pi-\ep,\rho_2}(u_1,u_2)$. Direct calculation shows on $M$,
\begin{align}\label{eq-uep}
\begin{cases}
-\Delta u^\ep_1=(8\pi-2\ep)(h_1e^{u_1^\ep}-1)-\rho_2(h_2e^{u^\ep_2}-1),\\
-\Delta u^\ep_2=2\rho_2(h_2e^{u^\ep_2}-1)-(4\pi-\ep)(h_1e^{u_1^\ep}-1).
\end{cases}
\end{align}

Denote $\overline{u^\ep_i}=\int_M u^\ep_i$ and $m^\ep_i=\max_{M}u^\ep_i=u_i^\ep(x_i^\ep)$ for some $x_i^{\ep}\in M$. 
Since $(u_1^\ep,u_2^\ep)$ minimizes $J_{4\pi-\ep,\rho_2}$ in $\mathcal{H}$, we have $\int_Me^{u_i^{\ep}}$ ($i=1,2$) is bounded from
below and above by two positive constants. Namely,
\begin{lem}\label{lem-bound}
There exist two positive constants $C_1$ and $C_2$ such that
\begin{align*}
C_1\leq \int_Me^{u_i^{\ep}} \leq C_2,~~i=1,2.
\end{align*}
\end{lem}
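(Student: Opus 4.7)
The plan is to prove the lower and upper bounds separately, using different ingredients. The lower bound $\int_M e^{u_i^\ep}\geq C_1$ is elementary and uses only the admissibility constraint. Writing $h_i=h_i^+-h_i^-$, the identity $\int_M h_i e^{u_i^\ep}=1$ gives $\int_M h_i^+ e^{u_i^\ep}=1+\int_M h_i^- e^{u_i^\ep}\geq 1$, so that $(\max_M h_i)\int_M e^{u_i^\ep}\geq\int_M h_i^+ e^{u_i^\ep}\geq 1$, yielding $\int_M e^{u_i^\ep}\geq 1/\max_M h_i$, which is a positive finite constant since $h_i$ is positive somewhere on $M$ by hypothesis.

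For the upper bound I would combine the minimizing property with the Jost-Wang Moser-Trudinger inequality \eqref{ineq-toda}. First I need a fixed admissible pair $(\phi_1,\phi_2)\in\mathcal{H}$: since $h_i$ is positive on a nonempty open set, I pick a smooth bump $\chi_i$ supported there and, since $\int_M h_i e^{t\chi_i}\to+\infty$ as $t\to+\infty$, use the intermediate value theorem to select $\phi_i$ of the form $t_i\chi_i+c_i$ with $\int_M h_i e^{\phi_i}=1$. This yields $J_{4\pi-\ep,\rho_2}(u_1^\ep,u_2^\ep)\leq J_{4\pi-\ep,\rho_2}(\phi_1,\phi_2)\leq C$ uniformly in $\ep\in(0,4\pi)$. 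Setting $v_i^\ep=u_i^\ep-\overline{u_i^\ep}$ and applying the equivalent $H^1$-form of \eqref{ineq-toda} obtained by specializing the admissibility constraint to $h_1=h_2\equiv 1$,
\begin{align*}
4\pi\log\int_M e^{v_1^\ep}+4\pi\log\int_M e^{v_2^\ep}\leq \tfrac{1}{3}\int_M\bigl(|\nabla u_1^\ep|^2+\nabla u_1^\ep\cdot\nabla u_2^\ep+|\nabla u_2^\ep|^2\bigr)+C,
\end{align*}
combined with Jensen's inequality $\log\int_M e^{v_i^\ep}\geq 0$ (which allows replacement of $(4\pi,4\pi)$ by the smaller weights $(4\pi-\ep,\rho_2)$) together with the minimality bound $J_{4\pi-\ep,\rho_2}(u_1^\ep,u_2^\ep)\leq C$, and then making the identification $\log\int_M e^{u_i^\ep}=\overline{u_i^\ep}+\log\int_M e^{v_i^\ep}$, gives
\begin{align*}
(4\pi-\ep)\log\int_M e^{u_1^\ep}+\rho_2\log\int_M e^{u_2^\ep}\leq C.
\end{align*}
Combined with the lower bound $\log\int_M e^{u_i^\ep}\geq-\log\max_M h_i$ already established, this forces each logarithm individually to be bounded above, hence $\int_M e^{u_i^\ep}\leq C_2$.

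The main subtlety is in producing the reference pair $(\phi_1,\phi_2)\in\mathcal{H}$: because $\int_M h_i$ need not be positive one cannot take constant test functions, so the localized bump construction is essential to exploit the hypothesis that $h_i$ is positive somewhere. A secondary but necessary point is the equivalence between \eqref{ineq-toda} (as an infimum statement on $\mathcal{H}$) and the direct $H^1$-form used above, which is the standard Legendre-dual reformulation obtained by the substitution $u_i=v_i-\log\int_M e^{v_i}$ into the functional; this is routine but should be recorded. Once this two-sided control is in hand, it becomes the foundation for all the subsequent convergence and blowup analysis of the minimizing sequence.
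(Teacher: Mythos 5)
Your proof is correct and follows essentially the same route as the paper: the lower bound comes from the constraint $\int_M h_ie^{u_i^\ep}=1$ together with $\max_M h_i>0$, and the upper bound from testing minimality against a fixed pair in $\mathcal{H}$ and combining the Jost--Wang Moser--Trudinger inequality with Jensen's inequality to absorb the mean-value terms. The only difference is that you spell out details the paper leaves implicit, namely the bump-function construction showing $\mathcal{H}\neq\emptyset$ and the passage to the unconstrained form of \eqref{ineq-toda}.
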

\begin{proof}
For $i=1,2$, the lower bound is easy since $\int_Mh_ie^{u_i^\ep}=1$ and $\max_Mh_i>0$. Since $\mathcal{H}$ is not empty, we can choose $(v_1,v_2)\in\mathcal{H}$, then
$$J_{4\pi-\ep,\rho_2}(u_1^\ep,u_2^\ep)=\inf_{(u_1,u_2)\in\mathcal{H}}J_{4\pi-\ep,\rho_2}(u_1,u_2)\leq J_{4\pi-\ep,\rho_2}(v_1,v_2)\to J_{4\pi,\rho_2}(v_1,v_2)\leq C.$$
This together with the Moser-Trudinger inequality \eqref{ineq-toda} and Jensen's inequality yields
\begin{align*}
\log\int_Me^{u_1^\ep}+\log\int_Me^{u_2^\ep}
\leq& \frac{1}{12\pi}\int_M(|\nabla u_1^\ep|^2+\nabla u_1^\ep\nabla u_2^\ep+|\nabla u_2^\ep|^2)+\overline{u_1^\ep}+\overline{u_2^\ep}+C\\
=& \frac{1}{4\pi}J_{4\pi-\ep,\rho_2}(u^\ep)+\frac{\ep}{4\pi}\overline{u_1^\ep}+\frac{4\pi-\rho_2}{4\pi}\overline{u_2^\ep}+C\\
\leq& \frac{\ep}{4\pi}\log\int_Me^{u_1^\ep}+\frac{4\pi-\rho_2}{4\pi}\log\int_Me^{u_2^\ep}+C.
\end{align*}
This combining with $\int_Me^{u_i^\ep}$ is bounded from below by some $C_1>0$ shows that $\int_M e^{u_i^\ep}\leq C_2$ for some $C_2>0$.
This completes the proof.
\end{proof}

\begin{lem}\label{lem-Ls}
For any $s\in(1,2)$, $\|\nabla u_i^\ep\|_{L^s(M)}\leq C$ for $i=1,2$.
\end{lem}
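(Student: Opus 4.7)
The plan is to show that the right-hand sides of \eqref{eq-uep} are uniformly bounded in $L^1(M)$, and then invoke the standard $L^1 \to W^{1,s}$ elliptic regularity estimate on a compact surface.

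First, I would observe that since $(u_1^\ep, u_2^\ep) \in \mathcal{H}$, we have $\int_M h_i e^{u_i^\ep} = 1$ for $i=1,2$. Combined with Lemma \ref{lem-bound} and the smoothness of $h_i$, this gives
\[
\|h_i e^{u_i^\ep}\|_{L^1(M)} \leq \|h_i\|_{L^\infty(M)} \int_M e^{u_i^\ep} \leq C,
\]
uniformly in $\ep$. Hence each right-hand side in \eqref{eq-uep}, being a linear combination of terms of the form $h_i e^{u_i^\ep} - 1$, is uniformly bounded in $L^1(M)$, and it is automatically of zero mean because $\int_M(h_ie^{u_i^\ep}-1)=0$.

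Second, I would apply the following classical fact on a compact Riemann surface $(M,g)$: if $-\Delta w = f$ in the distributional sense with $f \in L^1(M)$ and $\int_M f = 0$, then for every $s \in [1,2)$ there exists a constant $C_s > 0$ depending only on $(M,g)$ and $s$ such that
\[
\|\nabla w\|_{L^s(M)} \leq C_s \|f\|_{L^1(M)}.
\]
This is a consequence of the Green's function representation $w(x) - \overline{w} = \int_M G(x,y) f(y)\, dv_g(y)$, combined with the pointwise bound $|\nabla_x G(x,y)| \leq C/d_g(x,y)$ and the fact that $d_g(x,\cdot)^{-1} \in L^s(M)$ uniformly in $x$ as long as $s < 2$; Minkowski's integral inequality then delivers the stated bound.

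Applying this estimate to each component of \eqref{eq-uep}, and using that $\nabla u_i^\ep = \nabla(u_i^\ep - \overline{u_i^\ep})$, yields $\|\nabla u_i^\ep\|_{L^s(M)} \leq C$ for every $s \in (1,2)$, which is exactly the claim. There is no serious obstacle here; the only point worth verifying is the compatibility $\int_M (\mathrm{RHS}) = 0$, which follows directly from the membership of $(u_1^\ep,u_2^\ep)$ in $\mathcal{H}$.
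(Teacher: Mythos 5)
Your proof is correct, but it follows a genuinely different route from the paper's. The paper argues by duality: it writes $\|\nabla u_1^\ep\|_{L^s(M)}$ as a supremum of $\left|\int_M\nabla u_1^\ep\nabla\phi\right|$ over mean-zero test functions $\phi$ with $\|\phi\|_{W^{1,s'}(M)}=1$, $s'>2$, uses the Sobolev embedding $W^{1,s'}(M)\hookrightarrow L^\infty(M)$ to get $\|\phi\|_{L^\infty(M)}\leq C$, and then integrates by parts against \eqref{eq-uep} so that only the $L^1$ bound on the right-hand side is needed. You instead invoke the Green's function representation together with the pointwise bound $|\nabla_x G(x,y)|\leq C/d_g(x,y)$ and Minkowski's integral inequality. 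Both routes reduce the lemma to the same input --- the uniform $L^1$ bound on the right-hand sides of \eqref{eq-uep}, which you correctly derive from Lemma \ref{lem-bound} and $\|h_i\|_{L^\infty(M)}$ rather than from the constraint $\int_M h_ie^{u_i^\ep}=1$ alone (important here since $h_i$ may change sign). The duality argument is slightly lighter on prerequisites, needing only the Sobolev embedding in dimension two; your argument requires the standard gradient estimate for the Green's function (via a parametrix), but in exchange it is more quantitative, yielding the explicit operator bound $\|\nabla w\|_{L^s(M)}\leq C_s\|f\|_{L^1(M)}$, and it makes the role of the zero-mean compatibility condition transparent. Either proof is acceptable here.
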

\begin{proof}
Let $s'=1/s>2$, we know by definition that
\begin{align*}
\|\nabla u_1^\ep\|_{L^s(M)}=\sup\left\{\left|\int_M\nabla u_1^\ep\nabla \phi\right|: \phi\in W^{1,s'}(M), \int_M \phi =0, \|\phi\|_{W^{1,s'}(M)}=1\right\}.
\end{align*}
The Sobolev embedding theorem shows that $\|\phi\|_{L^\infty(M)}\leq C$ for some constant $C$. Then it follows by Eq. \eqref{eq-uep} and Lemma \ref{lem-bound} that
\begin{align*}
\left|\int_M\nabla u_1^\ep\nabla \phi\right|
=&\left|\int_M\phi(-\Delta u_1^\ep)\right|\\
=&\left|\int_M \phi[(8\pi-2\ep)(h_1e^{u_1^\ep}-1)-\rho_2(h_2e^{u^\ep_2}-1)]\right|\\
\leq&C.
\end{align*}
Therefore we have $\|\nabla u_1^\ep\|_{L^s(M)}\leq C$. Similarly, we have $\|\nabla u_2^\ep\|_{L^s(M)}\leq C$. This ends the proof.
\end{proof}

Concerning $(u^\ep_1, u^\ep_2)$, we have the following equivalent characterizations.
\begin{lem}\label{lem-char} The following three items are equivalent:

$(i)$ $m_1^{\ep}+m_2^{\ep}\to+\infty$ as $\ep\to0$;

$(ii)$ $\int_M(|\nabla u^\ep_1|^2+\nabla u_1^\ep\nabla u_2^\ep+|\nabla u^\ep_2|^2)\to+\infty$ as $\ep\to0$;

$(iii)$ $\overline{u^\ep_1}+\overline{u^\ep_2}\to-\infty$ as $\ep\to0$.
\end{lem}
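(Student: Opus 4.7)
The plan is to establish the cyclic chain $(i)\Rightarrow(ii)\Rightarrow(iii)\Rightarrow(i)$. The starting point is the observation that the minimum value $J_{4\pi-\ep,\rho_2}(u_1^\ep,u_2^\ep)$ is uniformly bounded in $\ep$: the upper bound follows from testing against any fixed $(v_1,v_2)\in\mathcal{H}$, and a uniform lower bound is a byproduct of the Moser-Trudinger inequality \eqref{ineq-toda} together with Lemma \ref{lem-bound}. Writing $D^\ep:=\int_M(|\nabla u_1^\ep|^2+\nabla u_1^\ep\nabla u_2^\ep+|\nabla u_2^\ep|^2)$, this produces the single identity driving the whole argument,
\begin{equation*}
\tfrac{1}{3}D^\ep+(4\pi-\ep)\overline{u_1^\ep}+\rho_2\overline{u_2^\ep}=J_{4\pi-\ep,\rho_2}(u_1^\ep,u_2^\ep)=O(1),
\end{equation*}
while Jensen's inequality combined with Lemma \ref{lem-bound} yields the one-sided bound $\overline{u_i^\ep}\leq\log\int_M e^{u_i^\ep}\leq\log C_2$.

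For $(i)\Rightarrow(ii)$ I argue by contradiction: if $D^\ep$ stays bounded along a subsequence, the identity together with the Jensen upper bounds forces both $\overline{u_i^\ep}$ to be bounded, since two quantities individually bounded above whose positive linear combination is bounded must themselves be bounded. Hence $(u_1^\ep,u_2^\ep)$ is bounded in $H^1(M)\times H^1(M)$; the Moser-Trudinger inequality then yields $e^{u_i^\ep}\in L^p(M)$ uniformly for every $p<\infty$, and standard $L^p$-elliptic regularity applied to \eqref{eq-uep} puts $u_i^\ep$ in $W^{2,p}(M)\hookrightarrow L^\infty(M)$ with uniform bound, contradicting $m_1^\ep+m_2^\ep\to+\infty$.

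The implication $(ii)\Rightarrow(iii)$ is direct: if $D^\ep\to+\infty$, the identity forces $(4\pi-\ep)\overline{u_1^\ep}+\rho_2\overline{u_2^\ep}\to-\infty$, and since both coefficients are positive while each mean is bounded above by $\log C_2$, at least one of the means must diverge to $-\infty$; together with the upper bound on the other, their sum $\overline{u_1^\ep}+\overline{u_2^\ep}$ diverges to $-\infty$.

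For $(iii)\Rightarrow(i)$ I use the contrapositive. Assume $m_1^\ep+m_2^\ep\leq C$; since each $m_i^\ep\geq\log\int_M e^{u_i^\ep}\geq\log C_1$ by Lemma \ref{lem-bound}, both $m_i^\ep$ are in fact bounded, so $u_i^\ep\leq C$, making the right-hand side of \eqref{eq-uep} uniformly bounded in $L^\infty(M)$. Standard elliptic regularity then gives $\|u_i^\ep-\overline{u_i^\ep}\|_{L^\infty(M)}\leq C$, and the relation $\overline{u_i^\ep}=m_i^\ep-(u_i^\ep(x_i^\ep)-\overline{u_i^\ep})$ forces each $\overline{u_i^\ep}$, and therefore $\overline{u_1^\ep}+\overline{u_2^\ep}$, to be bounded. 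The only delicate point in the entire argument is that the constants in the Moser-Trudinger and elliptic bootstrap steps must be independent of $\ep$; this is guaranteed by the uniform nature of Lemma \ref{lem-bound} and by the endpoint validity of \eqref{ineq-toda} at $\rho_1=4\pi$.
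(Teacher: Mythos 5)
Your proof is correct and follows essentially the same route as the paper's: the same decomposition of the bounded quantity $J_{4\pi-\ep,\rho_2}(u_1^\ep,u_2^\ep)$ into the Dirichlet part plus $(4\pi-\ep)\overline{u_1^\ep}+\rho_2\overline{u_2^\ep}$, the Jensen upper bound $\overline{u_i^\ep}\leq C$, and the Moser--Trudinger/elliptic bootstrap to rule out bounded energy or bounded maxima. The only cosmetic differences are that you organize the argument as a cycle $(i)\Rightarrow(ii)\Rightarrow(iii)\Rightarrow(i)$ rather than the paper's two equivalences, and in the last step you bound $\overline{u_i^\ep}$ directly via $\overline{u_i^\ep}=m_i^\ep-(u_i^\ep(x_i^\ep)-\overline{u_i^\ep})$ instead of invoking the normalization $\int_M h_1e^{u_1^\ep}=1$; both are sound.
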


\begin{proof} $(ii)\Leftrightarrow(iii)$: Since $J_{4\pi-\ep,\rho_2}$ is bounded, $(ii)$ is equivalent to
\begin{align}\label{lem-char-1}
(4\pi-\ep)\overline{u_1^\ep}+\rho_2\overline{u_2^\ep}\to-\infty~~\text{as}~~\ep\to0.
\end{align}
Using Lemma \ref{lem-bound} and Jensen's inequality, we have $\overline{u_i^\ep}\leq C$ for $i=1,2$. Therefore, \eqref{lem-char-1}
is equivalent to $(iii)$ and then $(ii)$ is equivalent to $(iii)$.

$(i)\Rightarrow (ii)$: Suppose not, we have
\begin{align*}
\int_{M}|\nabla u_1^\ep|^2+\int_{M}|\nabla u_2^\ep|^2\leq2\int_M(|\nabla u^\ep_1|^2+\nabla u_1^\ep\nabla u_2^\ep+|\nabla u^\ep_2|^2)\leq C.
\end{align*}
Meanwhile, by $(ii)\Leftrightarrow(iii)$ one knows $\overline{u_1^\ep}+\overline{u_2^\ep}\geq-C$. So $\overline{u_i^\ep}$ is bounded for $i=1,2$. By Poincar\'{e}'s inequality, we have for
$i=1,2$ that
\begin{align*}
\int_M  (u_i^\ep)^2-\overline{u_i^\ep}^2=\int_M(u_i^\ep-\overline{u_i^\ep})^2\leq C\int_M|\nabla u_i^\ep|^2\leq C.
\end{align*}
So $(u_i^\ep)$ is bounded in $L^2(M)$. Since $\|\nabla u_i^\ep\|_{L^2(M)}$ and $\overline{u_i^\ep}$ are both bounded, we have by the Moser-Trudinger inequality that $(e^{u_i^\ep})$
is bounded in $L^s(M)$ for any $s\geq1$. Then by using elliptic estimates to \eqref{eq-uep} we obtain that $(u_i^\ep)$ is bounded in
$W^{2,2}(M)$ and then $\|u_i^\ep\|_{L^{\infty}(M)}$ is bounded. Therefore, $m_i^\ep\leq C$ for $i=1,2$. This contradicts $(i)$.

$(ii)\Rightarrow(i)$: If not, then we have $m_1^\ep+m_2^\ep\leq C$.
Using Lemma \ref{lem-bound}, we have $m_i^\ep\geq C$ for $i=1,2$. So $m_i^\ep$ is bounded for $i=1,2$.
Then $(e^{u_i^\ep})$ is bounded. Since by Lemma \ref{lem-Ls}, $u_i^\ep-\overline{u_i^\ep}$ is bounded in $L^s(M)$ for any $s>1$, we have by using
elliptic estimates to \eqref{eq-uep} that $u_i^\ep-\overline{u_i^\ep}$ is bounded. Since $(ii)\Leftrightarrow(iii)$, we have
$\overline{u_1^\ep}+\overline{u_2^\ep}\to-\infty$. Notice that $\overline{u_i^\ep}\leq C$, we have $\overline{u_1^\ep}$ or $\overline{u_2^\ep}$
tends to $-\infty$. Without loss of generality, suppose $\overline{u_1^\ep}$ tends to $-\infty$. Then
\begin{align*}
1=\int_Mh_1e^{u_1^\ep}=\int_Mh_1e^{u_1^\ep-\overline{u_1^\ep}}e^{\overline{u_1^\ep}}\to0~~\text{as}~~\ep\to0.
\end{align*}
This is a contradiction.
\end{proof}

\begin{definition}[Blow up]
We call $(u_1^\ep,u_2^\ep)$ blows up, if one of the three items in Lemma \ref{lem-char} holds.
\end{definition}

When $(u_1^\ep,u_2^\ep)$ blows up, there holds
\begin{lem}\label{lem-mean}
Let $(u_1^\ep,u_2^\ep)$ minimize $J_{4\pi-\ep,\rho_2}$ in $\mathcal{H}$. If $(u_1^\ep,u_2^\ep)$ blows up, then
$$\overline{u_1^{\ep}}\to-\infty~~\text{as}~~\ep\to0~~\text{and}~~\overline{u^\ep_2}\geq-C.$$
\end{lem}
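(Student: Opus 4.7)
The plan is to leverage the strict subcriticality $\rho_2<4\pi$ to confine the blowup to the $u_1$ component. Specifically, I would first establish a uniform lower bound $\overline{u_2^\ep}\geq -C$, and then combine it with the blowup characterization $(iii)$ of Lemma~\ref{lem-char} to deduce $\overline{u_1^\ep}\to-\infty$.

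The key ingredient is already embedded in the proof of Lemma~\ref{lem-bound}: the Jost--Wang Moser--Trudinger inequality \eqref{ineq-toda}, applied to $(u_1^\ep,u_2^\ep)$ and combined with the definition of $J_{4\pi-\ep,\rho_2}$, yields
\begin{align*}
\log\int_M e^{u_1^\ep}+\log\int_M e^{u_2^\ep}
\leq \frac{1}{4\pi}J_{4\pi-\ep,\rho_2}(u_1^\ep,u_2^\ep)+\frac{\ep}{4\pi}\overline{u_1^\ep}+\frac{4\pi-\rho_2}{4\pi}\overline{u_2^\ep}+C.
\end{align*}
I would then estimate each piece uniformly in $\ep$: Lemma~\ref{lem-bound} bounds the left-hand side from below by $-C$; the minimizing property of $(u_1^\ep,u_2^\ep)$ against any fixed competitor $(v_1,v_2)\in\mathcal{H}$ gives $J_{4\pi-\ep,\rho_2}(u_1^\ep,u_2^\ep)\leq C$; and Jensen's inequality together with the upper bound $\int_M e^{u_1^\ep}\leq C_2$ yields $\overline{u_1^\ep}\leq C$, whence $\frac{\ep}{4\pi}\overline{u_1^\ep}\leq C$ as well. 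Rearranging, the coefficient $\frac{4\pi-\rho_2}{4\pi}$ of $\overline{u_2^\ep}$ is \emph{strictly} positive since $\rho_2\in(0,4\pi)$, so one divides through to conclude $\overline{u_2^\ep}\geq -C$ uniformly in $\ep$.

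The blowup hypothesis together with Lemma~\ref{lem-char}~$(iii)$ gives $\overline{u_1^\ep}+\overline{u_2^\ep}\to-\infty$, and combined with $\overline{u_2^\ep}\geq -C$ this forces $\overline{u_1^\ep}\to-\infty$, which is the assertion. No real obstacle arises: the whole argument is a single application of the Toda Moser--Trudinger inequality. The telling observation is that as $\ep\to 0$ the coefficient $\frac{\ep}{4\pi}$ multiplying $\overline{u_1^\ep}$ degenerates, so no a priori control on $\overline{u_1^\ep}$ can be extracted, whereas the coefficient $\frac{4\pi-\rho_2}{4\pi}$ multiplying $\overline{u_2^\ep}$ stays bounded away from $0$ precisely because $\rho_2<4\pi$ is strictly subcritical; this asymmetry is exactly what produces the asymmetric conclusion.
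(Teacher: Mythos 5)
Your argument is correct and is essentially the same as the paper's: both rest on a single application of the Jost--Wang Moser--Trudinger inequality at the critical parameters, the uniform bound $J_{4\pi-\ep,\rho_2}(u_1^\ep,u_2^\ep)\leq C$, the Jensen bound $\overline{u_1^\ep}\leq C$, and the strict positivity of $4\pi-\rho_2$ to isolate $\overline{u_2^\ep}\geq -C$, after which Lemma~\ref{lem-char}~$(iii)$ forces $\overline{u_1^\ep}\to-\infty$. The only cosmetic difference is that you phrase the inequality in the logarithmic form used in the proof of Lemma~\ref{lem-bound}, while the paper applies \eqref{ineq-toda} directly as $J_{4\pi,4\pi}\geq -C$ on $\mathcal{H}$; these are equivalent.
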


\begin{proof}
Since $J_{4\pi-\ep,\rho_2}(u_1^\ep,u_2^\ep)$ is bounded, we have by \eqref{ineq-toda} that
\begin{align*}
C\geq& J_{4\pi-\ep,\rho_2}(u_1^\ep,u_2^\ep)\nonumber\\
 \geq& \frac{1}{3}\int_M(|\nabla u_1^\ep|^2+\nabla u_1^\ep\nabla u_2^\ep+|\nabla u_2^\ep|^2)+(4\pi-\ep)\overline{u_1^\ep}+\rho_2\overline{u_2^\ep}\nonumber\\
 \geq& C-\ep\overline{u_1^\ep}-(4\pi-\rho_2)\overline{u_2^\ep}.
\end{align*}
Since $\overline{u_1^\ep}\leq C$ and $\rho_2<4\pi$, we have
$$\overline{u_2^\ep}\geq-C.$$
If $(u_1^\ep,u_2^\ep)$ blows up, it follows from Lemma \ref{lem-char} that $\overline{u_1^\ep}\to-\infty$ as $\ep\to0$. This finishes the proof.
\end{proof}

If $(u_1^\ep,u_2^\ep)$ does not blow up, then by Lemma \ref{lem-char}, one can show that $(u_1^\ep,u_2^\ep)$ converges to $(u_1^0,u_2^0)$ in $\mathcal{H}$ and $(u_1^0,u_2^0)$ minimizes $J_{4\pi,\rho_2}$.
The proof of Theorem \ref{thm-zhu-1} terminates in this case. Therefore, we assume $(u_1^\ep,u_2^\ep)$ blows up in the rest of this paper.

By Lemma \ref{lem-Ls}, there exist $G_i$, $i=1,2$ such that $u_1^\ep-\overline{u_1^\ep}\rightharpoonup G_1$ and $u_2^\ep \rightharpoonup G_2$ weakly in $W^{1,s}(M)$ for any $1<s<2$ as $\ep\to0$.
Since $(e^{u_i^\ep})$ is bounded in $L^1(M)$ we may extract a subsequence (still denoted $e^{u_i^\ep}$) such that $e^{u_i^\ep}$ converges in the sense of measures on
$M$ to some nonnegative bounded measure $\mu_i$ for $i=1,2$. We set
\begin{align*}
\gamma_1=8\pi h_1\mu_1-\rho_2 h_2\mu_2,~~\gamma_2=2\rho_2 h_2\mu_2-4\pi h_1\mu_1
\end{align*}
and
\begin{align*}
S_i=\{x\in M: |\gamma_i(\{x\})|\geq4\pi\},~i=1,2.
\end{align*}
Let $S=S_1\cup S_2$. By Theorem 1 in \cite{BM}, it is easy to show that for any $\Omega\subset\subset M\setminus S$,
\begin{align}\label{outbound}
u_i^\ep-\overline{u_i^\ep}~\text{is~uniformly~bounded~in~} \Omega,~i=1,2.
\end{align}
Since $(u_1^\ep,u_2^\ep)$ blows up, we know $S$ is not empty (Or else, with a finite covering argument, we have by \eqref{outbound} that $\|u_i^\ep-\overline{u_i^\ep}\|_{L^\infty(M)}\leq C$, then $m^\ep_i\leq C$, this contradicts with Lemma \ref{lem-char} (i)). Meanwhile, by the definition of $S$, we have for any $x\in S$,
$$\mu_1(\{x\})\geq\frac{1}{4\max_M|h_1|}~~\text{or}~~\mu_2(\{x\})\geq\frac{\pi}{\rho_2\max_M|h_2|}.$$
In view of $\mu_1$ and $\mu_2$ are bounded, $S$ is a finite set. We denote $S=\{x_l\}_{l=1}^L$.
It follows from
\eqref{outbound} and Fatou's lemma that
$$\mu_1=\sum_{l=1}^L\mu_1(\{x_l\})\delta_{x_l}~~\text{and}~~\mu_2=e^{G_2}+\sum_{l=1}^L\mu_2(\{x_l\})\delta_{x_l}.$$
\begin{lem}\label{lem-x1}
$\rm{supp}\mu_1$ is a single point set.
\end{lem}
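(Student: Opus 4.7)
\textbf{Proof proposal for Lemma \ref{lem-x1}.}

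The plan is to combine a global mass identity with a local blow-up analysis at each concentration point of $\mu_1$, finishing via Chen--Li's classification of entire finite-mass solutions of the Liouville equation in $\mathbb{R}^2$.

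\emph{Step 1 (global mass identity).} From \eqref{outbound} together with $\overline{u_1^\ep}\to-\infty$ (Lemma \ref{lem-mean}), $e^{u_1^\ep}\to 0$ pointwise a.e.\ on $M\setminus S$. Combined with the uniform $L^1$ bound, this forces $\mu_1$ to be purely atomic on $S=\{x_1,\dots,x_L\}$, so the measure expression already displayed in the text is automatic. Passing to the limit in the constraint $\int_M h_1 e^{u_1^\ep}=1$ and using continuity of $h_1$ yields
\begin{align}\label{p-global}
\sum_{l=1}^{L} h_1(x_l)\,\mu_1(\{x_l\}) = 1.
\end{align}

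\emph{Step 2 (local Liouville limit).} For each $x_l\in\operatorname{supp}\mu_1$, pick $y_l^\ep\to x_l$ with $u_1^\ep(y_l^\ep)=\max_{B_r(x_l)} u_1^\ep\to+\infty$, set $\lambda_\ep=e^{-u_1^\ep(y_l^\ep)/2}\to 0$ and, in an isothermal chart centered at $x_l$, define
\[
v^\ep(y)=u_1^\ep(y_l^\ep+\lambda_\ep y)-u_1^\ep(y_l^\ep).
\]
Then $v^\ep(0)=0$, $v^\ep\le 0$ on its domain, and the first equation in \eqref{eq-uep} rewrites as
\[
-\Delta v^\ep=(8\pi-2\ep)\,h_1(y_l^\ep+\lambda_\ep y)\,e^{v^\ep}-\lambda_\ep^2\rho_2\,h_2\,e^{u_2^\ep(y_l^\ep+\lambda_\ep y)}+O(\lambda_\ep^2).
\]
Verifying that the cross term vanishes locally uniformly on $\mathbb{R}^2$ and extracting a subsequential limit, elliptic regularity gives $v^\ep\to v\in C^2_{loc}(\mathbb{R}^2)$ with
\[
-\Delta v=8\pi\,h_1(x_l)\,e^v,\qquad v(0)=0,\qquad \int_{\mathbb{R}^2} e^v<\infty.
\]

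\emph{Step 3 (Chen--Li and counting).} Such an entire finite-mass solution exists \emph{only if} $h_1(x_l)>0$, and then $8\pi h_1(x_l)\int_{\mathbb{R}^2}e^v=8\pi$. A standard Brezis--Merle ``no-neck'' estimate, enabled by Lemma \ref{lem-Ls} and \eqref{outbound}, upgrades this to the exact local mass
\[
h_1(x_l)\,\mu_1(\{x_l\})=1 \qquad \text{for every } x_l\in\operatorname{supp}\mu_1.
\]
Plugged into \eqref{p-global}, every nonzero summand equals $1$, hence exactly one term survives: $|\operatorname{supp}\mu_1|=1$.

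The hard part is Step 2, namely decoupling the two equations inside the bubble. The scale $\lambda_\ep$ is tuned to $u_1^\ep$, but $u_2^\ep$ may itself concentrate at $x_l$; rigorously killing $\lambda_\ep^2\rho_2 h_2 e^{u_2^\ep}$ on compact sets of $\mathbb{R}^2$ relies on the strict sub-criticality $\rho_2<4\pi$, which forces the intrinsic bubbling scale of $u_2^\ep$ (if any) to be strictly coarser than $\lambda_\ep$. In the sign-changing setting one cannot exploit the positivity-based shortcuts of \cite{JLW06}, so a careful rate comparison, or alternatively an a priori local $L^\infty$ bound for $u_2^\ep$ in a punctured neighborhood of $x_l$, is needed. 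A secondary subtlety is turning the Liouville mass identity into the sharp equality $h_1(x_l)\mu_1(\{x_l\})=1$ (rather than the inequality $\ge 1$) by matching the limits $R\to\infty$ in the rescaled ball $B_{R\lambda_\ep}(y_l^\ep)$ with $r\to 0$ in the ambient ball $B_r(x_l)$; this is standard in the Ding--Jost--Li--Wang framework but demands explicit bookkeeping of the boundary terms.
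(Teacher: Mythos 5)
Your route is genuinely different from the paper's, but as written it has a real gap, and it is exactly the one you flag yourself: the decoupling in Step 2. At the stage where Lemma \ref{lem-x1} is needed, nothing in the paper controls $u_2^\ep$ near a concentration point of $\mu_1$. The local boundedness $u_2^\ep\le C$ near $x_1$ is precisely the content of the \emph{next} lemma in the paper, whose proof (Cases 2.1--2.3) takes $S=\{x_1\}$ and $h_1\mu_1=\delta_{x_1}$ as input --- i.e.\ it uses Lemma \ref{lem-x1}. So an ``a priori local $L^\infty$ bound for $u_2^\ep$ in a punctured neighborhood of $x_l$'' is not available here without circularity, and the heuristic that $\rho_2<4\pi$ forces the $u_2^\ep$-scale to be coarser than $\lambda_\ep$ is not a proof: one must rule out $\lambda_\ep^2 e^{u_2^\ep(y_l^\ep+\lambda_\ep y)}\to\infty$ on compact sets, which a priori can happen if $u_2^\ep$ concentrates at $x_l$ faster than $u_1^\ep$. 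Without this, the limit equation in Step 2, and hence the sign information $h_1(x_l)>0$ and the mass lower bound $h_1(x_l)\mu_1(\{x_l\})\ge 1$ in Step 3, are not established. (A minor remark in your favor: if Step 2 were secured, you would not need the sharp quantization $h_1(x_l)\mu_1(\{x_l\})=1$; the inequality $\ge 1$ at each support point together with \eqref{p-global} already forces a single point.)

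The paper's own proof avoids all local analysis. It is a two-line global argument: if $\mu_1$ charged two distinct points, the improved (Chen--Li) Moser--Trudinger inequality would give $\log\int_M e^{u_1^\ep}\le(\tfrac{1}{32\pi}+\ep')\int_M|\nabla u_1^\ep|^2+\overline{u_1^\ep}+C$, with constant $\tfrac{1}{32\pi}$ instead of $\tfrac{1}{16\pi}$; combining this with the energy bound \eqref{lem-blowupset-2}, namely $\tfrac14\int_M|\nabla u_1^\ep|^2+(4\pi-\ep)\overline{u_1^\ep}\le C$, and the lower bound on $\int_M e^{u_1^\ep}$ from Lemma \ref{lem-bound}, yields $\overline{u_1^\ep}\ge -C$, contradicting Lemma \ref{lem-mean}. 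If you want to salvage a blow-up-based proof you would have to first establish the local control of $u_2^\ep$ independently of Lemma \ref{lem-x1}; the variational argument via the improved Moser--Trudinger inequality is both shorter and sidesteps the coupling entirely.
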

\begin{proof}
It follows from Lemma \ref{lem-bound} that ${\rm{supp}}\mu_1\neq\emptyset$.
If there are two different points in ${\rm{supp}}\mu_1$, then by Lemma \ref{lem-bound} and the improved Moser-Trudinger inequality (cf. \cite{CL91b}, Theorem 2.1),
for any $\ep'>0$, there exists some $C=C(\ep')>0$ such that
\begin{align}\label{lem-blowupset-1}
C\leq\log\int_Me^{u_1^\ep}\leq(\frac{1}{32\pi}+\ep')\int_M|\nabla u_1^\ep|^2+\overline{u_1^\ep}+C.
\end{align}
Since
\begin{align}\label{lem-blowupset-2}
C\geq& J_{4\pi-\ep,\rho_2}(u_1^\ep,u_2^\ep)\nonumber\\
=&\frac{1}{3}\int_M(|\nabla u_1^\ep|^2+\nabla u_1^\ep\nabla u_2^\ep+|\nabla u_2^\ep|^2)+(4\pi-\ep)\overline{u_1^\ep}+\rho_2\overline{u_2^\ep}\nonumber\\
=&\frac{1}{4}\int_M|\nabla u_1^\ep|^2+(4\pi-\ep)\overline{u_1^\ep}+\frac{1}{3}\int_M|\nabla(u_2^\ep+\frac12u_1^\ep)|^2+\rho_2\overline{u_2^\ep}\nonumber\\
\geq&\frac{1}{4}\int_M|\nabla u_1^\ep|^2+(4\pi-\ep)\overline{u_1^\ep}-C,
\end{align}
then we have by combining \eqref{lem-blowupset-1} and \eqref{lem-blowupset-2} that
\begin{align*}
\overline{u_1^\ep}\geq-C.
\end{align*}
In view of Lemmas \ref{lem-char} and \ref{lem-mean}, this is a contradiction. Therefore, ${\rm{supp}}\mu_1$ is a single point set.
This completes the proof.
\end{proof}

Since by \eqref{outbound} we know ${\rm{supp}}\mu_1\subset S$, we can assume without loss of generality that
${\rm{supp}}\mu_1=\{x_1\}$. By noticing that $\int_Mh_1e^{u_1^\ep}=1$, we have $h_1\mu_1=\delta_{x_1}$.

\begin{lem}\label{lem-4pi} There holds
$\gamma_2(\{x_l\})\leq-4\pi$ if $x_l\neq x_1$ and $\gamma_2(\{x_1\})<4\pi$.
\end{lem}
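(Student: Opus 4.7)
The lemma splits into two assertions: $\gamma_2(\{x_l\})\le-4\pi$ for $x_l\ne x_1$, and $\gamma_2(\{x_1\})<4\pi$. Since Lemma \ref{lem-x1} gives $h_1\mu_1=\delta_{x_1}$, for $x_l\ne x_1$ one has $\mu_1(\{x_l\})=0$, so
\[
\gamma_2(\{x_l\})=2\rho_2 h_2(x_l)\mu_2(\{x_l\}),\qquad \gamma_2(\{x_1\})=2\rho_2 h_2(x_1)\mu_2(\{x_1\})-4\pi.
\]
Also, $x_l\in S\setminus\{x_1\}$ forces $\mu_2(\{x_l\})>0$ (otherwise both $\gamma_1(\{x_l\})$ and $\gamma_2(\{x_l\})$ vanish).

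My plan is to carry out a local blow-up analysis of $u_2^\ep$ at each $x_l$. The key simplification for $x_l\ne x_1$ is that, by \eqref{outbound} combined with $\overline{u_1^\ep}\to-\infty$ (Lemma \ref{lem-mean}), $u_1^\ep\to-\infty$ uniformly on any compact subset of $M\setminus\{x_1\}$; hence the coupling term $(4\pi-\ep)(h_1 e^{u_1^\ep}-1)$ in the second equation of \eqref{eq-uep} is uniformly bounded on a small ball $B_r(x_l)$. I would rescale $u_2^\ep$ about a local maximum $y_\ep\to x_l$ by $\delta^\ep=e^{-u_2^\ep(y_\ep)/2}$ to obtain, in the limit, a Liouville-type equation on $\R^2$ with coefficient $2\rho_2 h_2(x_l)$. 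Combining the Brezis--Merle/Chen--Li classification with a local Pohozaev identity applied to \eqref{eq-uep} on $B_r(x_l)$, together with the mass normalization $\int_M h_2 e^{u_2^\ep}=1$, pins down the quantized value of $\gamma_2(\{x_l\})$. The sign-changing nature of $h_2$ produces a "negative branch" of the quantization, yielding $\gamma_2(\{x_l\})\le-4\pi$. It may also be convenient to work with the decoupled combination $-\Delta(u_1^\ep+2u_2^\ep)=3\rho_2(h_2 e^{u_2^\ep}-1)$, whose right-hand side depends only on $u_2^\ep$, so the blow-up analysis is strictly scalar after the reduction above.

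For $\gamma_2(\{x_1\})<4\pi$, I would run the parallel Pohozaev/blow-up argument at $x_1$, this time retaining the full coupling with $u_1^\ep$ (which concentrates at $x_1$ with $h_1(x_1)\mu_1(\{x_1\})=1$). The resulting identity for the atomic mass at $x_1$, combined with the strict subcriticality $\rho_2<4\pi$, rules out the borderline case $2\rho_2 h_2(x_1)\mu_2(\{x_1\})=8\pi$ and yields the strict inequality.

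The main obstacle is handling the sign-changing potential $h_2$ in the Pohozaev/blow-up step. A direct application of the classical analysis assuming $h_2>0$ would give only the positive quantization $\gamma_2=8\pi$ at a single bubble; extracting the correct signed branch requires combining the system structure (the linear combination $u_1^\ep+2u_2^\ep$ and the fact that $u_1^\ep$ decays away from $x_1$) with the global mass constraint $\int_M h_2 e^{u_2^\ep}=1$ and the maximum-principle argument that rules out interior blow-up of $u_2^\ep$ at points where $h_2<0$ when the coupling from $u_1^\ep$ is absent.
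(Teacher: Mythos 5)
Your proposal has a genuine gap: the upper bound $\gamma_2(\{x_l\})<4\pi$ is never actually proved, and the route you sketch for it does not close. You defer everything to a rescaling of $u_2^\ep$ plus a Brezis--Merle/Chen--Li classification and a Pohozaev identity, but the Chen--Li classification requires a \emph{positive} coefficient in the limiting Liouville equation, which fails precisely in the sign-changing situation the lemma is designed to handle; there is no ``negative branch of the quantization'' to invoke, so your conclusion $\gamma_2(\{x_l\})\le -4\pi$ for $x_l\ne x_1$ rests on an unproved (and, as stated, nonexistent) classification. At $x_1$ the situation is worse: $\mu_2$ has a nontrivial regular part $e^{G_2}$ (since $\overline{u_2^\ep}\ge -C$ by Lemma \ref{lem-mean}), so $e^{u_2^\ep}$ does not fully concentrate and a local Pohozaev identity does not pin down an atomic mass; moreover you would need to exclude the whole range $\gamma_2(\{x_1\})\ge 4\pi$, not merely the single ``borderline'' value $2\rho_2h_2(x_1)\mu_2(\{x_1\})=8\pi$ that you mention.

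The paper's mechanism is far more elementary and you should note it, because your own algebra already contains the second half. For the bound $\gamma_2(\{x_l\})<4\pi$ (at every $x_l$, including $x_1$), the paper uses the boundary control $u_2^\ep|_{\partial B_r(x_l)}\ge -C_0$ (from $\overline{u_2^\ep}\ge -C$ and \eqref{outbound}) to build a barrier $w_2^\ep\le u_2^\ep$ solving the same equation with constant boundary data; its weak limit $w_2$ carries the Dirac mass $\gamma_2(\{x_l\})\delta_{x_l}$, and if $\gamma_2(\{x_l\})>0$ the Green-function lower bound gives $e^{w_2}\ge C|x-x_l|^{-\gamma_2(\{x_l\})/(2\pi)}$, which by Fatou and $\int_M e^{u_2^\ep}\le C$ (Lemma \ref{lem-bound}) forces $\gamma_2(\{x_l\})<4\pi$. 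Once that is in hand, your observation that $\gamma_1(\{x_l\})=-\tfrac12\gamma_2(\{x_l\})$ for $x_l\ne x_1$ finishes the first assertion by pure bookkeeping: if $\gamma_2(\{x_l\})\in(-4\pi,4\pi)$ then $|\gamma_1(\{x_l\})|<2\pi$ as well, so $x_l\notin S$ by the definition of $S$, a contradiction. No blow-up analysis, classification, or Pohozaev identity is needed anywhere in this lemma.
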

\begin{proof}
Since of Lemma \ref{lem-mean} and \eqref{outbound}, we know $\overline{u_2^\ep}\geq-C$. For any $x_l\in S$, choosing $r>0$ sufficiently small, we have
$u_2^\ep\mid_{\partial B_r(x_l)}\geq-C_0$ for some constant $C_0$. Let $w_2^\ep$ be the solution of
\begin{align*}
\begin{cases}
-\Delta w_2^\ep = 2\rho_2(h_2e^{u^\ep_2}-1)-(4\pi-\ep)(h_1e^{u_1^\ep}-1)~~\text{in}~B_r(x_l),\\
w_2^\ep = -C_0~~\text{on}~\p B_r(x_l).
\end{cases}
\end{align*}
By the maximum principle $w_2^\ep\leq u_2^\ep$ in $B_r(x_l)$. Since $2\rho_2h_2e^{u^\ep_2}-(4\pi-\ep)h_1e^{u_1^\ep}$ is bounded in $L^1(B_r(x_l))$,
$w_2^\ep\rightharpoonup w_2$ weakly in $W^{1,s}(B_r(x_l))$ for any $1<s<2$, where $w_2$ is the solution of
\begin{align*}
\begin{cases}
-\Delta w_2 = 2\rho_2(h_2e^{G_2}-1)+4\pi+\gamma_2(\{x_l\})\delta_{x_l}~~\text{in}~B_r(x_l),\\
w_2 = -C_0~~\text{on}~\p B_r(x_l).
\end{cases}
\end{align*}
Since $h_1\mu_1=\delta_{x_1}$, if $\gamma_2(\{x_l\})>0$, then $h_2(x_l)>0$ and one has
\begin{align*}
 2\rho_2(h_2e^{G_2}-1)+4\pi \geq -C~~\text{near}~~x_l.
\end{align*}
Then we have $-\Delta w_2\geq\gamma_2(\{x_l\})\delta_{x_l}-C$ in $B_r(x_l)$ (Here, for simplicity, we assume $r$ is small enough to ensure $h_2(x_l)>0$
in $B_r(x_l)$). Therefore
$$w_2\geq-\frac{1}{2\pi}\gamma_2(\{x_l\})\log|x-x_l|-C~~\text{in}~B_r(x_l).$$
Thus $e^{w_2}\geq C/|x-x_l|^{\frac{\gamma_2(\{x_l\})}{2\pi}}$. Note that it follows by Fatou's lemma that
$$\int_{B_r(x_l)}e^{w_2}\leq\lim_{\ep\to0}\int_{B_r(x_l)}e^{w_2^\ep}\leq\lim_{\ep\to0}\int_{B_r(x_l)}e^{u_2^\ep}\leq C.$$
Then we have
$$\gamma_2(\{x_l\})<4\pi,~~\forall l=1,2,\cdots,L.$$
If $x_l\neq x_1$, we have $\gamma_2(\{x_l\})\leq-4\pi$. In fact, if $\gamma_2(\{x_l\})>-4\pi$, then since $x_l\neq x_1$, one has
\begin{align*}
\gamma_1(\{x_l\})=-\rho_2 h_2(x_l)\mu_2(\{x_l\})=-\frac{1}{2}\gamma_2(\{x_l\})\in(-2\pi,2\pi).
\end{align*}
Then $x_l\notin S$. A contradiction. This ends the proof.
\end{proof}

Now we are prepared to prove the following lemma, which can be seen as a key in the proof of our main theorem. We remark that this lemma is obtained much more directly
with the help of Proposition 2.4 in \cite{JLW06} when the prescribed functions $h_1$ and $h_2$ are positive. However, when $h_1$ or $h_2$ changes signs,
we do not have the counterpart of Proposition 2.4 in \cite{JLW06} in the hand, and therefore more effort is needed in our situation.

\begin{lem} We have $u_2^\ep\leq C$.
\end{lem}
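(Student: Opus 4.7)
The plan is to argue by contradiction: suppose along a subsequence $\sup_M u_2^\ep \to +\infty$ and let $y_\ep$ be maximum points of $u_2^\ep$ with $y_\ep \to y_\ast$ for some $y_\ast \in M$. Outside $S$, $u_2^\ep$ is already uniformly bounded above, because $u_2^\ep - \overline{u_2^\ep}$ is locally bounded there by \eqref{outbound}, while $\overline{u_2^\ep}$ is bounded above by Jensen's inequality combined with Lemma \ref{lem-bound}. So the blow-up point $y_\ast$ must lie in $S=\{x_1,\ldots,x_L\}$, and I would rule out each possibility in turn.

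For $y_\ast=x_l$ with $l\neq 1$, Lemma \ref{lem-4pi} delivers $\gamma_2(\{x_l\})\leq -4\pi$, forcing $h_2(x_l)<0$. I would choose a small ball $B_r(x_l)$ disjoint from $x_1$ on which $h_2\leq -c<0$, and observe that $e^{u_1^\ep}\to 0$ uniformly on $\overline{B_r(x_l)}$, because $u_1^\ep - \overline{u_1^\ep}$ is bounded there while $\overline{u_1^\ep}\to -\infty$ by Lemma \ref{lem-mean}. Then equation \eqref{eq-uep} reduces to
\begin{align*}
-\Delta u_2^\ep \leq -2\rho_2 c\, e^{u_2^\ep} + C \quad \text{on } B_r(x_l).
\end{align*}
Since $u_2^\ep$ is uniformly bounded on $\partial B_r(x_l)\subset M\setminus S$, at any interior maximum point $\Delta u_2^\ep\leq 0$, so the displayed inequality forces $e^{u_2^\ep}\leq C/(2\rho_2 c)$ there; either way $u_2^\ep \leq C$ on $B_r(x_l)$, contradicting $y_\ep\to x_l$.

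The hard case, which I expect to be the main obstacle, is $y_\ast=x_1$, because the extra term $-(4\pi-\ep)h_1 e^{u_1^\ep}$ in the $u_2^\ep$-equation now concentrates as well and cannot simply be treated as a bounded perturbation. When $h_2(x_1)<0$, I can still run the above maximum principle argument: the relation $h_1\mu_1=\delta_{x_1}$ gives $h_1(x_1)>0$, so near $x_1$ the extra term is $\leq 0$, may be dropped from the inequality for $-\Delta u_2^\ep$, and the same conclusion follows. The genuinely delicate case is $h_2(x_1)>0$; in a small neighborhood of $x_1$ both $h_1$ and $h_2$ are then positive, so the system is locally a standard positive-coefficient Toda system, and I would invoke the fully-bubbling mass quantization for the $A_2$ Toda blow-up: since $h_1\mu_1=\delta_{x_1}$ fixes the $u_1$-mass at $x_1$ to be $8\pi$, any blow-up of $u_2^\ep$ at $x_1$ must come with $u_2$-mass $2\rho_2 h_2(x_1)\mu_2(\{x_1\})=8\pi$, yielding $\gamma_2(\{x_1\})=4\pi$ in direct contradiction with the strict inequality $\gamma_2(\{x_1\})<4\pi$ from Lemma \ref{lem-4pi}. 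The degenerate case $h_2(x_1)=0$ would require separate care, perhaps exploiting that $h_2 e^{u_2^\ep}$ is then suppressed near $x_1$ so that the equation makes $u_2^\ep$ effectively subharmonic up to bounded perturbations and thus controlled by its boundary values on $\partial B_r(x_1)$.
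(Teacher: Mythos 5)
Your skeleton (reduce to the points of $S$, then do a sign analysis) is the right one, and your treatment of the subcase $h_2(x_1)<0$ with $\mu_2(\{x_1\})>0$ is essentially the paper's Case 2.3 (a maximum-principle evaluation at the interior maximum, using $h_1(x_1)>0$). But two steps do not hold up. First, for $x_l\in S$ with $x_l\neq x_1$ you assert that $e^{u_1^\ep}\to 0$ \emph{uniformly} on $\overline{B_r(x_l)}$ because $u_1^\ep-\overline{u_1^\ep}$ is bounded there. It is not: $x_l$ belongs to $S$, so \eqref{outbound} gives no bound on any ball containing $x_l$, and in fact $\gamma_1(\{x_l\})=-\tfrac12\gamma_2(\{x_l\})\geq 2\pi>0$, so the limit $G_1$ has a \emph{positive} logarithmic singularity at $x_l$ and $u_1^\ep-\overline{u_1^\ep}$ cannot be expected to be bounded above near $x_l$. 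All that is available is $\int_{B_r(x_l)}|h_1|e^{u_1^\ep}\to 0$; the paper converts exactly this $L^1$-smallness, via the Newtonian potential $v_1^\ep$ and Brezis--Merle's exponential integrability, into a local $L^s$ bound on $e^{u_2^\ep}$ and then contradicts $\mu_2(\{x_l\})>0$ by H\"older. Your pointwise maximum-principle shortcut needs a sup bound on $h_1e^{u_1^\ep}$ at the maximum point of $u_2^\ep$ that you have not established.

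Second, for the case $h_2(x_1)>0$ with $\mu_2(\{x_1\})>0$ you invoke the fully-bubbling mass quantization (Proposition 2.4 of \cite{JLW06}). This is precisely the tool the paper says is \emph{not} available here and that this lemma is designed to circumvent; even granting that $h_1,h_2>0$ in a small ball around $x_1$, applying that classification requires verifying its hypotheses (bounded local energies, oscillation control on spheres) in the present sign-changing global setting, which you do not do, so the argument rests on an unproved black box. The paper instead stays in Case 2.2 ($h_2(x_1)\mu_2(\{x_1\})>0$) with another Brezis--Merle argument on the combined source term $f_\ep=2\rho_2h_2e^{u_2^\ep}-(4\pi-\ep)h_1e^{u_1^\ep}$, using $-4\pi<\gamma_2(\{x_1\})<4\pi$ from Lemma \ref{lem-4pi}, to again force $\mu_2(\{x_1\})=0$, a contradiction. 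Finally, note that the correct trichotomy is on the sign of $h_2(x_1)\mu_2(\{x_1\})$, not of $h_2(x_1)$: your split leaves open the subcase $h_2(x_1)>0$ but $\mu_2(\{x_1\})=0$ with $\sup u_2^\ep\to+\infty$ (concentration with vanishing mass), which the paper's Case 2.1 handles by the small-mass Brezis--Merle estimate, and your $h_2(x_1)=0$ case is acknowledged but not actually closed (the product $h_2e^{u_2^\ep}$ is not pointwise bounded just because $h_2$ is small).
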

\begin{proof}
By Lemma \ref{lem-4pi}, we divide the whole proof into two cases.

\textbf{Case 1} $\gamma_2(\{x_l\})\leq-4\pi$ ($x_l\neq x_1$).

In this case, we have $2\rho_2h_2(x_l)\mu_2(\{x_l\})\leq-4\pi$, then $h_2(x_l)<0$ and $\mu_2(\{x_l\})>0$. We can choose $r$ sufficiently small such that
$h_2(x)<0$ in $B_{r}(x_l)$. Consider
\begin{align*}
\begin{cases}
-\Delta v_1^\ep = -(4\pi-\ep)h_1e^{u_1^\ep}~~\text{in}~~B_{r}(x_l),\\
v_1^\ep=0~~\text{on}~~\partial B_{r}(x_l).
\end{cases}
\end{align*}
We define $v_2^\ep=u_2^\ep-\overline{u_2^\ep}-v_1^\ep$. Then $-\Delta v_2^\ep=-2\rho_2+(4\pi-\ep)+2\rho_2h_2e^{u_2^\ep}\leq-2\rho_2+(4\pi-\ep)$.
By Theorem 8.17 in \cite{GT} (or Theorem 4.1 in \cite{HL}) and Lemma \ref{lem-Ls}, we have
\begin{align*}
\sup_{B_{r/2}(x_l)}v_2^\ep \leq& C\left(\|(v_2^\ep)^+\|_{L^s(B_r(x_l))}+C\right)\nonumber\\
\leq& C\left(\|u_2^\ep-\overline{u_2^\ep}\|_{L^s(M)}+\|v_1^\ep\|_{L^s(B_r(x_l))}+C\right)\nonumber\\
\leq& C\left(\|\nabla u_2^\ep\|_{L^s(M)}+\|v_1^\ep\|_{L^s(B_r(x_l))}+C\right)\nonumber\\
\leq& C\left(\|v_1^\ep\|_{L^s(B_r(x_l))}+C\right).
\end{align*}
Notice that $h_1\mu_1=\delta_{x_1}$ and $x_l\neq x_1$, one has $\int_{B_r(x_l)}|h_1|e^{u_1^\ep}\to0$ as $\ep\to0$ for sufficiently small $r$.
It then follows from Theorem 1 in \cite{BM} that $\int_{B_r(x_l)}e^{t|v_1^\ep|}\leq C$ for some $t>1$, which yields that
$$\|v_1^\ep\|_{L^s(B_r(x_l))}\leq C.$$
Then we have
$$\sup_{B_{r/2}(x_l)}v_2^\ep\leq C.$$
Note that
\begin{align*}
\int_{B_{r/2}(x_l)}e^{su_2^\ep}=&\int_{B_{r/2}(x_l)}e^{s\overline{u_2^\ep}}e^{sv_2^\ep}e^{sv_1^\ep}\\
\leq& C\int_{B_{r/2}(x_l)}e^{s|v_1^\ep|}\\
\leq& C.
\end{align*}
Therefore, one has by H\"older's inequality that
\begin{align*}
\mu_2(\{x_l\})=\lim_{r\to0}\lim_{\ep\to0}\int_{B_{r/2}(x_l)}e^{u_2^\ep}
\leq\lim_{r\to0}\lim_{\ep\to0}\left(\int_{B_{r/2}(x_l)}e^{su_2^\ep}\right)^{1/s}\left(\text{vol}B_{r/2}(x_l)\right)^{1-1/s}=0,
\end{align*}
this is a contradiction with $\mu_2(\{x_l\})>0$. Hence, we obtain $S=\{x_1\}$.

\textbf{Case 2} $\gamma_2(\{x_1\})<4\pi$ (We shall divide this case into three subcases.)

\textbf{Case 2.1} $h_2(x_1)\mu_2{(x_1)}=0$.

Choosing $r>0$ sufficiently small such that $h_1(x)>0$ in $B_r(x_1)$. Let $z_1^\ep$ be the solution of
\begin{align*}
\begin{cases}
-\Delta z_1^\ep = 2\rho_2h_2e^{u_2^\ep}~~\text{in}~B_r(x_1),\\
z_1^\ep=0~~\text{on}~\p B_r(x_1).
\end{cases}
\end{align*}
Let $z_2^\ep=u_2^\ep-\overline{u_2^\ep}-z_1^\ep$ so that $-\Delta z_2^\ep \leq -2\rho_2+(4\pi-\ep)$.
By Theorem 8.17 in \cite{GT} (or Theorem 4.1 in \cite{HL}) and Lemma \ref{lem-Ls}, we have
\begin{align*}
\sup_{B_{r/2}(x_1)}z_2^\ep \leq& C\left(\|(z_2^\ep)^+\|_{L^s(B_r(x_1))}+C\right)\\
\leq& C\left(\|u_2^\ep-\overline{u_2^\ep}\|_{L^s(M)}+\|z_1^\ep\|_{L^s(B_r(x_1)))}+C\right)\\
\leq& C\left(\|\nabla u_2^\ep\|_{L^s(M)}+\|z_1^\ep\|_{L^s(B_r(x_1))}+C\right)\\
\leq& C\left(\|z_1^\ep\|_{L^s(B_r(x_1))}+C\right).
\end{align*}
Since $\int_{B_r(x_1)}|h_2|e^{u_2^\ep}\to0$ as $\ep\to0$ for sufficiently small $r$, it follows from Theorem 1 in \cite{BM} that $\int_{B_r(x_1)}e^{t|z_1^\ep|}\leq C$ for some $t>1$, which yields that
$$\|z_1^\ep\|_{L^s(B_r(x_1))}\leq C.$$
Then we have
$$\sup_{B_{r/2}(x_1)}z_2^\ep\leq C.$$
Note that
\begin{align*}
\int_{B_{r/2}(x_1)}e^{tu_2^\ep}=&\int_{B_{r/2}(x_1)}e^{t\overline{u_2^\ep}}e^{tz_2^\ep}e^{tz_1^\ep}\\
\leq& C\int_{B_{r/2}(x_1)}e^{t|z_1^\ep|}\\
\leq& C.
\end{align*}
By the standard elliptic estimates, we have
\begin{align*}
\|z_1^\ep\|_{L^{\infty}(B_{r/4}(x_1))} \leq C.
\end{align*}
Therefore, we obtain that
\begin{align*}
u_2^\ep-\overline{u_2^\ep}\leq C~~\text{in}~~B_{r/4}(x_1).
\end{align*}

\textbf{Case 2.2} $h_2(x_1)\mu_2{(x_1)}>0$.

Consider the equation
\begin{align*}
\begin{cases}
-\Delta v_1^\ep = 2\rho_2h_2e^{u_2^\ep}-(4\pi-\ep)h_1e^{u_1^\ep}:=f_\ep~~~~\text{in}~~B_\delta(x_1),\\
v_1^\ep = 0~~\text{on}~~\partial B_\delta(x_1).
\end{cases}
\end{align*}
Define $v_2^\ep=u_2^\ep-\overline{u_2^\ep}-v_1^\ep$, then $-\Delta v_2^\ep = (4\pi-\ep)-2\rho_2$ in $B_\delta(x_1)$.
By Theorem 4.1 in \cite{HL} and Lemma \ref{lem-Ls}, we have
\begin{align*}
\sup_{B_{\delta/2}(x_1)}|v_2^\ep| \leq& C\left(\|v_2^\ep\|_{L^1(B_\delta(x_1))}+C\right)\\
\leq& C\left(\|u_2^\ep-\overline{u_2^\ep}\|_{L^1(M)}+\|v_1^\ep\|_{L^1(B_{\delta}(x_1))}+C\right)\\
\leq& C\left(\|\nabla u_2^\ep\|_{L^s(M)}+\|v_1^\ep\|_{L^1(B_{\delta}(x_1))}+C\right)\\
\leq& C\left(\|v_1^\ep\|_{L^1(B_{\delta}(x_1))}+C\right).
\end{align*}
Since in this case $\|f_\ep\|_{L^1(B_{\delta}(x_1))}<4\pi$ for sufficiently small $\ep>0$,
it then follows from Theorem 1 in \cite{BM} that $e^{s|v_1^\ep|}$ is bounded in $B_{\delta}(x_1)$ for some $s>1$, which yields that
$$\|v_1^\ep\|_{L^1(B_\delta(x_1))}\leq C.$$
Then we have
$$\sup_{B_{\delta/2}(x_1)}v_2^\ep\leq C.$$
Note that
\begin{align}\label{eq-prop0}
\int_{B_{\delta/2}(x_1)}e^{su_2^\ep}=&\int_{B_{\delta/2}(x_1)}e^{s\overline{u_2^\ep}}e^{sv_2^\ep}e^{sv_1^\ep}\nonumber\\
\leq& C\int_{B_{\delta/2}(x_1)}e^{s|v_1^\ep|}\nonumber\\
\leq& C.
\end{align}
Therefore, one has by H\"older's inequality that
\begin{align*}
\mu_2(\{x_1\})=\lim_{\delta\to0}\lim_{\ep\to0}\int_{B_{r/2}(x_l)}e^{u_2^\ep}
\leq\lim_{\delta\to0}\lim_{\ep\to0}\left(\int_{B_{r/2}(x_l)}e^{su_2^\ep}\right)^{1/s}\left(\text{vol}B_{r/2}(x_l)\right)^{1-1/s}=0,
\end{align*}
this is a contradiction with $\mu_2(\{x_1\})>0$. This shows that this subcase will not happen.

\textbf{Case 2.3} $h_2(x_1)\mu_2{(x_1)}<0$.

Since $S=\{x_1\}$, it follows by \eqref{outbound} that $u_2^\ep$ is locally uniformly bounded in $M\setminus\{x_1\}$. But in this subcase, we have $\mu_2(\{x_1\})>0$, then
$\max_{B_r(x_1)}u_2^\ep=\max_Mu_2^\ep\to+\infty$ as $\ep\to0$. We assume $u_2^\ep(x_2^\ep)=\max_{B_r(x_1)}u_2^\ep$, it is obvious that $x_2^\ep\to x_1$ as $\ep\to0$. At the
maximum point $x_2^\ep$, we have
\begin{align*}
-\Delta u_2^\ep(x_2^\ep) = 2\rho_2(h_2(x_2^\ep)e^{u_2^\ep(x_2^\ep)}-1)-(4\pi-\ep)(h_1(x_2^\ep)e^{u_1^\ep(x_2^\ep)}-1)<0.
\end{align*}
This is a contradiction. Therefore, this subcase will not happen either.

Concluding all the cases above, we finish the proof.
\end{proof}

Since $S=\{x_1\}$ and $h_1\mu_1=\delta_{x_1}$, we have $x_1^\ep\to x_1$ as $\ep\to0$ by \eqref{outbound}.
Let $(\Omega;(x^1,x^2))$ be an isothermal coordinate system around $x_1$ and we assume the metric to be
$$g|_\Omega = e^{\phi}((dx^1)^2+(dx^2)^2),~~\phi(0)=0.$$
We have
\begin{align}\label{bubble-0}
u^\ep_1(x^\ep_1+r_1^{\ep}x)-m_1^\ep\to -2\log(1+\pi h_1(x_1)|x|^2),
\end{align}
where $r_1^\ep=e^{-m_1^\ep/2}$. Recalling that for any $s\in(1,2)$, we have
$u^\ep_1-\overline{u^\ep_1}\to G_1$ weakly in $W^{1,s}(M)$ and strongly in $C^2_{\text{loc}}(M\setminus\{x_1\})$,
$u_2^\ep\to G_2$ weakly in $W^{1,s}(M)$ and strongly in $C^2_{\text{loc}}(M\setminus\{x_1\})$, where $G_1$ and $G_2$ satisfy
\begin{align}\label{eq-green-0}
\begin{cases}
-\Delta G_1 = 8\pi(\delta_{x_1}-1) - \rho_2(h_2e^{G_2}-1),\\
-\Delta G_2 = 2\rho_2(h_2e^{G_2}-1)-4\pi(\delta_{x_1}-1),\\
\int_MG_1=0,~\int_Mh_2e^{G_2}=1,~\sup_MG_2\leq C.
\end{cases}
\end{align}
Here $\delta_{x_1}$ is the Dirac distribution.

It was proved in \cite{LL05} (page 708) that
\begin{align}\label{green-expression}
G_1(x,x_1)=-4\log r+A_1(x_1)+f,~~G_2(x,x_1)=2\log r+A_2(x_1)+g,
\end{align}
where $r^2=x_1^2+x_2^2$, $A_i(x_1)$ $(i=1,2)$ are constants and $f,g$ are two smooth functions which are zero at $x_1$.

\section{ The lower bound for $J_{4\pi,\rho_2}$}
In this section, we shall give the first step in proving Theorem \ref{thm-zhu-1}: deriving an explicit lower bound of $J_{4\pi,\rho_2}$ when $(u_1^\ep,u_2^\ep)$ blows up.

Define $v_2^\ep=\frac13(2u_2^\ep+u_1^\ep)-\frac13(2\overline{u_2^\ep}+\overline{u_1^\ep})$, we have
\begin{align*}
\begin{cases}
&-\Delta v_2^\ep = (4\pi-\ep)(h_2e^{u_2^\ep}-1),\\
&\int_Mv_2^\ep=0.
\end{cases}
\end{align*}
Notice that $u_2^\ep\leq C$, it follows from the standard elliptic estimates that $\|v_2^\ep\|_{C^1(M)}\leq C$. Then we obtain that
\begin{align*}
\frac{1}{3}\int_{B_{\delta}(x_1^\ep)}(|\nabla u_1^\ep|^2+\nabla u_1^\ep\nabla u_2^\ep+|\nabla u_2^\ep|^2)
=&\frac{1}{4}\int_{B_{\delta}(x_1^\ep)}|\nabla u_1^\ep|^2+\frac{3}{4}\int_{B_{\delta}(x_1^\ep)}|\nabla v_2^\ep|^2\\
=&\frac{1}{4}\int_{B_{\delta}(x_1^\ep)}|\nabla u_1^\ep|^2+O(\delta^2).
\end{align*}
Denote $w(x)=-2\log(1+\pi h_1(x_1)|x|^2)$, we have by \eqref{bubble-0} that
\begin{align*}
\frac{1}{4}\int_{B_{\delta}(x_1^\ep)}|\nabla u_1^\ep|^2=&\frac{1}{4}\int_{B_{L}}|\nabla w|^2\\
&+\frac{1}{4}\int_{B_{\delta}(x_1^\ep)\setminus B_{Lr_1^\ep}(x_1^\ep)}|\nabla u_1^\ep|^2+o_\ep(1)+O(\delta^2).
\end{align*}
To estimate $\int_{B_{\delta}(x_1^\ep)\setminus B_{Lr_1^\ep}(x_1^\ep)}|\nabla u_1^\ep|^2$, we shall follow \cite{LL05} closely. Let
\begin{align*}
a_1^\ep = \inf_{\partial B_{Lr_1^\ep}(x_1^\ep)}u_1^\ep,~~~~b_1^\ep = \sup_{\partial B_{Lr_1^\ep}(x_1^\ep)}u_1^\ep.
\end{align*}
We set $a_1^\ep-b_1^\ep=m_1^\ep-\overline{u_1^\ep}+d_1^\ep$. Then
\begin{align*}
d_1^\ep = w(L)-\sup_{\partial B_{\delta}(p)}G_1+o_{\ep}(1).
\end{align*}
Define $f_1^\ep=\max\{\min\{u_1^\ep,a_1^\ep\},b_1^\ep\}$. We have
\begin{align*}
\int_{B_{\delta}(x_1^\ep)\setminus B_{Lr_1^\ep}(x_1^\ep)}|\nabla u_1^\ep|^2
\geq&\int_{B_{\delta}(x_1^\ep)\setminus B_{Lr_1^\ep}(x_1^\ep)}|\nabla f_1^\ep|^2\\
=&\int_{B_{\delta}(x_1^\ep)\setminus B_{Lr_1^\ep}(x_1^\ep)}|\nabla_{\mathbb{R}^2} f_1^\ep|^2\\
\geq&\inf_{\Psi|_{\partial B_{Lr_1^\ep}(0)}=a_1^\ep,\Psi|_{\partial B_{\delta}(0)}=b_1^\ep}\int_{B_{\delta}(0)\setminus B_{Lr_1^\ep}(0)}|\nabla_{\mathbb{R}^2} \Psi|^2.
\end{align*}
By the Dirichlet's principle, we know
$$\inf_{\Psi|_{\partial B_{Lr_1^\ep}(0)}=a_1^\ep,\Psi|_{\partial B_{\delta}(0)}=b_1^\ep}\int_{B_{\delta}(0)\setminus B_{Lr_1^\ep}(0)}|\nabla_{\mathbb{R}^2} \Psi|^2$$
is uniquely attained by the following harmonic function
\begin{align*}
\begin{cases}
&-\Delta_{\mathbb{R}^2}\phi = 0,\\
&\phi|_{\partial B_{Lr_1^\ep}(0)}=a_1^\ep,\phi|_{\partial B_{\delta}(0)}=b_1^\ep.
\end{cases}
\end{align*}
Thus,
\begin{align*}
\phi=\frac{a_1^\ep-b_1^\ep}{-\log Lr_1^\ep+\log\delta}\log r - \frac{a_1^\ep\log\delta-b_1^\ep\log Lr_1^\ep}{-\log Lr_1^\ep+\log\delta},
\end{align*}
and then
\begin{align*}
\int_{B_{\delta}(0)\setminus B_{Lr_1^\ep}(0)}|\nabla_{\mathbb{R}^2}\phi|^2 = \frac{4\pi(a_1^\ep-b_1^\ep)^2}{-\log(Lr_1^\ep)^2+\log\delta^2}.
\end{align*}
Concluding, we have
\begin{align*}
\int_{B_{\delta}(x_1^\ep)\setminus B_{Lr_1^\ep}(x_1^\ep)}|\nabla u_1^\ep|^2
\geq \frac{4\pi(a_1^\ep-b_1^\ep)^2}{-\log(Lr_1^\ep)^2+\log\delta^2}.
\end{align*}
Since $-\log(r_1^\ep)^2=m_1^\ep$, we obtain
\begin{align}\label{eq-neck-1}
\int_{B_{\delta}(x_1^\ep)\setminus B_{Lr_1^\ep}(x_1^\ep)}|\nabla u_1^\ep|^2
\geq& 4\pi\frac{(m_1^\ep-\overline{u_1^\ep}+d_1^\ep)^2}{m_1^\ep-\log L^2+\log\delta^2}.
\end{align}
By \eqref{lem-blowupset-2}, one has
\begin{align}\label{eq-neck-2}
\frac{1}{4}\int_{B_{\delta}(x_1^\ep)\setminus B_{Lr_1^\ep}(x_1^\ep)}|\nabla u_1^\ep|^2+(4\pi-\ep)\overline{u_1^\ep}
\leq \frac{1}{4}\int_{M}|\nabla u_1^\ep|^2+(4\pi-\ep)\overline{u_1^\ep}\leq C.
\end{align}
It follows form \eqref{eq-neck-1} and \eqref{eq-neck-2} that
\begin{align}\label{eq-neck-3}
\pi\frac{(m_1^\ep-\overline{u_1^\ep}+d_1^\ep)^2}{m_1^\ep-\log L^2+\log\delta^2}+(4\pi-\ep)\overline{u_1^\ep}\leq C.
\end{align}
Recalling that $\overline{u_1^\ep}\to-\infty$ and $m_1^\ep\to+\infty$, we get from \eqref{eq-neck-3}
\begin{align}\label{eq-neck-4}
\frac{\overline{u_1^\ep}}{m_1^\ep}=-1+o_{\ep}(1)
\end{align}
by dividing both sides by $m_1^\ep$ and letting $\ep$ tend to $0$. Taking \eqref{eq-neck-4} into \eqref{eq-neck-1}, we have
\begin{align*}
\int_{B_{\delta}(x_1^\ep)\setminus B_{Lr_1^\ep}(x_1^\ep)}|\nabla u_1^\ep|^2
\geq4\pi\frac{(m_1^\ep-\overline{u_1^\ep})^2}{m_1^\ep}+16\pi\left(d_1^\ep+\log L^2-\log\delta^2+o_{\ep}(1)\right).
\end{align*}
Then
\begin{align}\label{energy-in}
&\frac{1}{3}\int_{B_{\delta}(x_1^\ep)}(|\nabla u_1^\ep|^2+\nabla u_1^\ep\nabla u_2^\ep+|\nabla u_2^\ep|^2)+(4\pi-\ep)\overline{u_1^\ep}+\rho_2\overline{u_2^\ep}\nonumber\\
\geq&-4\pi-4\pi\log(\pi h_1(x_1))-4\pi A_1(x_1)+8\pi\log\delta\nonumber\\
 &+\rho_2\int_MG_2+o_{\ep}(1)+o_{L}(1)+o_{\delta}(1).
\end{align}

Using \eqref{eq-green-0} and \eqref{green-expression}, one has
\begin{align}\label{energy-out}
&\frac{1}{3}\int_{M\setminus B_{\delta}(x_1^\ep)}(|\nabla u_1^\ep|^2+\nabla u_1^\ep\nabla u_2^\ep+|\nabla u_2^\ep|^2)\nonumber\\
=&\frac{\rho_2}{2}\int_{M}G_2(h_2e^{G_2}-1)-8\pi\log\delta+2\pi A_1(x_1)+o_{\ep}(1)+o_{\delta}(1).
\end{align}
Combining \eqref{energy-in} and \eqref{energy-out}, we have
\begin{align*}
J_{4\pi-\ep,\rho_2}(u_1^\ep,u_2^\ep)\geq&-4\pi-4\pi\log(\pi h_1(x_1))-2\pi A_1(x_1)
 +\frac{\rho_2}{2}\int_{M}G_2(h_2e^{G_2}+1)\nonumber\\
 &+o_{\ep}(1)+o_{L}(1)+o_{\delta}(1).
\end{align*}
By letting $\ep\to0$ first, then $L\to+\infty$ and then $\delta\to0$, we obtain finally that
\begin{align}\label{lower-bound}
\inf_{u\in\mathcal{H}}J_{4\pi,\rho_2}(u)\geq&-4\pi-4\pi\log(\pi h_1(x_1))-2\pi A_1(x_1)
 +\frac{\rho_2}{2}\int_{M}G_2(h_2e^{G_2}+1)\nonumber\\
 \geq&-4\pi-4\pi\log\pi-2\pi\max_{x\in M_+}\left(2\log h_1(x)+A_1(x)\right)\nonumber\\
 &+\frac{\rho_2}{2}\int_{M}G_2(h_2e^{G_2}+1).
\end{align}

\section{Completion of the proof of Theorem \ref{thm-zhu-1}}
In this section, we first outline the rest proof, then construct the blowup sequences like in \cite{LL05} and present our calculations.
\subsection{Outline of the rest proof}
Let $\phi_1^\ep$ and $\phi_2^\ep$ be defined as \cite{LL05} (see section 6). If the condition \eqref{zhu-cond-1} is satisfied on $M_+$, we can follow \cite{LL05} step by step to show that for sufficiently small $\ep$
\begin{align*}
J_{4\pi,\rho_2}(\phi_1^\ep,\phi_2^\ep)<&-4\pi-4\pi\log\pi-2\pi\max_{x\in M_+}\left(2\log h_1(x)+A_1(x)\right)\nonumber\\
 &+\frac{\rho_2}{2}\int_{M}G_2(h_2e^{G_2}+1).
\end{align*}
It is easy to check that $\int_{M}h_1e^{\phi_1^\ep}>0$ and $\int_{M}h_2e^{\phi_2^\ep}>0$, we define
\begin{align*}
\widetilde{\phi_i^\ep}=\phi_i^\ep-\log\int_{M}h_ie^{\phi_i^\ep},~~i=1,2.
\end{align*}
Then $(\phi_1^\ep,\phi_2^\ep)\in\mathcal{H}$. Since $J_{4\pi,\rho_2}(u_1+c_1,u_2+c_2)=J_{4\pi,\rho_2}(u_1,u_2)$ for any $c_1,c_2\in\mathbb{R}$, we have for sufficiently small $\ep$ that
\begin{align}\label{up-bound-last}
\inf_{u\in\mathcal{H}}J_{4\pi,\rho_2}(u)\leq& J_{4\pi,\rho_2}(\widetilde{\phi_1^\ep},\widetilde{\phi_2^\ep})=J_{4\pi,\rho_2}(\phi_1^\ep,\phi_2^\ep)\nonumber\\
<&-4\pi-4\pi\log\pi-2\pi\max_{x\in M_+}\left(2\log h_1(x)+A_1(x)\right)\nonumber\\
 &+\frac{\rho_2}{2}\int_{M}G_2(h_2e^{G_2}+1).
\end{align}
Combining \eqref{lower-bound} and \eqref{up-bound-last}, one knows that $(u_1^\ep,u_2^\ep)$ does not blow up. So $(u_1^\ep,u_2^\ep)$ converges to some $(u_1^0,u_2^0)$
which minimizes $J_{4\pi,\rho_2}$ in $\mathcal{H}$ and solves \eqref{eq-jlw-1}. The smooth of $u_1^0$ and $u_2^0$ follows from the standard elliptic estimates. Finally,
we complete the proof of Theorem \ref{thm-zhu-1}.

\subsection{Test function}
Suppose that $2\log h_1(p)+A_1(p)=\max_{x\in M_+}(2\log h_1(x)+A_1(x))$.
Let $(\Omega;(x^1,x^2))$ be an isothermal coordinate system around $p$ and we assume the metric to be
$$g|_\Omega = e^{\phi}((dx^1)^2+(dx^2)^2),$$
and
$$\phi=b_1(p)x^1+b_2(p)x^2+c_1(p)(x^1)^2+c_2(p)(x^2)^2+c_{12}(p)x^1x^2+O(r^3),$$
where $r(x^1,x^2)=\sqrt{(x^1)^2+(x^2)^2}$.
Moreover we assume near $p$ that
\begin{align*}
G_i=a_i\log r+A_i(p)+&\lambda_i(p)x^1+\nu_i(p)x^2+\alpha_i(p)(x^1)^2+\beta_i(p)(x^2)^2+\xi_i(p)x^1x^2\\
    &+\ell_i(x^1,x^2)+O(r^4), i=1,2,
\end{align*}
where $a_1=-4$, $a_2=2$.
It is well known that
\begin{align*}
K(p)=-(c_1(p)+c_2(p)),\\
|\nabla u|^2dV_g=|\nabla u|^2dx^1dx^2,
\end{align*}
and
\begin{align*}
\frac{\p u}{\p n}dS_g=\frac{\p u}{\p r}rd\theta.
\end{align*}
For $\alpha_i$ and $\beta_i$, we have the following lemma:
\begin{lem}\label{lem-alpha}
We have
\begin{align*}
\alpha_1(p)+\beta_1(p)=4\pi-\frac{\rho_2}{2},~~\alpha_2(p)+\beta_2(p)=\rho_2-2\pi.
\end{align*}
\end{lem}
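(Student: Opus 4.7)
The strategy is to substitute the prescribed local expansions of $G_1,G_2$ into the Green's function system \eqref{eq-green-0} and match constant terms at $p$.

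First, I would isolate the smooth parts. Writing $F_i := G_i - a_i \log r$ with $a_1=-4$, $a_2=2$, the given expansions say that
\begin{align*}
F_i(x) = A_i(p) + \lambda_i x^1 + \nu_i x^2 + \alpha_i (x^1)^2 + \beta_i (x^2)^2 + \xi_i x^1 x^2 + \ell_i(x) + O(r^4),
\end{align*}
where $\ell_i$ is a homogeneous cubic in $(x^1,x^2)$. In the isothermal chart $\Delta_g = e^{-\phi}\Delta_{\mathbb{R}^2}$, and since $\Delta_{\mathbb{R}^2}(a_i \log r) = 2\pi a_i \delta_0$, the singular contribution absorbs exactly the Dirac masses $\pm 8\pi \delta_p$ and $\mp 4\pi \delta_p$ in \eqref{eq-green-0}. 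Hence, away from $p$, the regular parts satisfy
\begin{align*}
-\Delta_{\mathbb{R}^2} F_1 &= e^{\phi}\!\left[\rho_2 - 8\pi - \rho_2 h_2 e^{G_2}\right],\\
-\Delta_{\mathbb{R}^2} F_2 &= e^{\phi}\!\left[4\pi - 2\rho_2 + 2\rho_2 h_2 e^{G_2}\right].
\end{align*}

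Next, I would pass to the limit $x \to 0$. Two observations make this immediate: first, $\phi(0)=0$ by the normalisation of the isothermal coordinates, so $e^{\phi(0)}=1$; second, since $G_2 = 2\log r + O(1)$ near $p$ one has $e^{G_2(x)} = O(r^2) \to 0$, so the product $h_2(x)e^{G_2(x)}$ tends to zero regardless of the sign of $h_2(p)$. Therefore the right-hand sides reduce at $x=0$ to the constants $\rho_2 - 8\pi$ and $4\pi - 2\rho_2$ respectively. On the other hand, the constant, linear, and cross-quadratic terms of $F_i$ contribute nothing to $\Delta_{\mathbb{R}^2}$, the cubic remainder $\ell_i$ has a linear Laplacian which vanishes at the origin, and the $O(r^4)$ error is negligible; so
\begin{align*}
-\Delta_{\mathbb{R}^2} F_i\big|_{x=0} = -2\bigl(\alpha_i(p) + \beta_i(p)\bigr).
\end{align*}

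Equating both sides yields $-2(\alpha_1(p)+\beta_1(p)) = \rho_2 - 8\pi$ and $-2(\alpha_2(p)+\beta_2(p)) = 4\pi - 2\rho_2$, which rearrange to the two asserted identities. The computation is essentially mechanical; the only point that deserves care is the matching of the distributional singularities, which is transparent from $\Delta_{\mathbb{R}^2}(\pm 2\log r) = \pm 4\pi \delta_0$ combined with $e^{\phi(0)} = 1$. No real obstacle arises, and the argument works uniformly whether $h_2(p)$ is positive, negative, or zero because the factor $r^2$ on $e^{G_2}$ always forces the nonlinear term to vanish at $p$.
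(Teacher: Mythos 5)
Your proof is correct and follows essentially the same route as the paper: apply the Euclidean Laplacian to the local expansion of $G_i$ near $p$, use the Green's function system \eqref{eq-green-0} together with $\phi(0)=0$ and $e^{G_2}=O(r^2)$ to evaluate the right-hand side at $p$, and read off $2(\alpha_i(p)+\beta_i(p))$. The only cosmetic difference is that you explicitly subtract off the $a_i\log r$ singularity before taking Laplacians, which makes the matching of the Dirac masses more transparent (and your conformal factor $e^{\phi}$ is the correct one, where the paper writes $e^{-\phi}$ — immaterial at $p$ since $\phi(0)=0$).
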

\begin{proof}
We have near $p$ that
\begin{align*}
2\alpha_1(p)+2\beta_1(p)+O(r)=\Delta_{\mathbb{R}^2}G_1=e^{-\phi}[8\pi+\rho_2(h_2e^{G_2}-1)],\\
2\alpha_2(p)+2\beta_2(p)+O(r)=\Delta_{\mathbb{R}^2}G_2=e^{-\phi}[-2\rho_2(h_2e^{G_2}-1)-4\pi],
\end{align*}
then the lemma is proved since $e^{G_2}=O(r^2)$ near $p$.
\end{proof}

We choose as in \cite{LL05} that
\begin{align*}
\phi_1^\ep=
\begin{cases}
w(\frac{x}{\ep})+\lambda_1(p)r\cos\theta+\nu_1(p)\sin\theta, &x\in B_{L\ep}(p),\\
G_1-\eta H_1+4\log(L\ep)-2\log(1+\pi L^2)-A_1(p),  &x\in B_{2L\ep}(p)\setminus B_{L\ep}(p),\\
G_1+4\log(L\ep)-2\log(1+\pi L^2)-A_1(p),&\text{otherwise}
\end{cases}
\end{align*}
and
\begin{align*}
\phi_2^\ep=
\begin{cases}
-\frac{w(\frac{x}{\ep})+2\log(1+\pi L^2)}{2}+2\log (L\ep)\\
~~~~+\lambda_2(p)r\cos\theta+\nu_2(p)r\sin\theta+A_2(p), &x\in B_{L\ep}(p),\\
G_2-\eta H_2,  &x\in B_{2L\ep}(p)\setminus B_{L\ep}(p),\\
G_2,&\text{otherwise}.
\end{cases}
\end{align*}
Here,
$$H_i=G_i-a_i\log r-A_i(p)-\lambda_i(p)r\cos\theta-\nu_i(p)r\sin\theta,~~i=1,2$$
and $\eta$ is a cut-off function which equals $1$ in $B_{L\ep}(p)$, equals $0$ in $M\setminus B_{2L\ep}(p)$ and satisfies $|\nabla\eta|\leq\frac{C}{L\ep}$.

Using Lemma 5.2 in \cite{LL05} and Lemma \ref{lem-alpha}, we have
\begin{align*}
\int_{M}|\nabla\phi_1^\ep|^2=&\int_{B_{L\ep}(p)}|\nabla\phi_1^\ep|^2+\int_{M\setminus B_{L\ep}(p)}|\nabla G_1|^2\\
&-2\int_{M\setminus B_{L\ep}(p)}\nabla G_1\nabla(\eta H_1)+\int_{M\setminus B_{L\ep}(p)}|\nabla(\eta H_1)|^2\\
=&\int_{B_L(0)}|\nabla_{\mathbb{R}^2} w|^2+\pi(\lambda_1^2(p)+\nu_1^2(p))(L\ep)^2-8\pi(4\pi-\frac{\rho_2}{2})(L\ep)^2\\
 &+\int_{M\setminus B_{L\ep}(p)}|\nabla G_1|^2+O((L\ep)^4),
\end{align*}
\begin{align*}
\int_{M}|\nabla\phi_2^\ep|^2=&\int_{B_{L\ep}(p)}|\nabla\phi_2^\ep|^2+\int_{M\setminus B_{L\ep}(p)}|\nabla G_2|^2\\
&-2\int_{M\setminus B_{L\ep}(p)}\nabla G_2\nabla(\eta H_2)+\int_{M\setminus B_{L\ep}(p)}|\nabla(\eta H_2)|^2\\
=&\frac{1}{4}\int_{B_L(0)}|\nabla_{\mathbb{R}^2} w|^2+\pi(\lambda_2^2(p)+\nu_2^2(p))(L\ep)^2+4\pi(\rho_2-2\pi)(L\ep)^2\\
 &+\int_{M\setminus B_{L\ep}(p)}|\nabla G_2|^2+O((L\ep)^4)
\end{align*}
and
\begin{align*}
\int_{M}\nabla\phi_1^\ep\nabla\phi_2^\ep=&\int_{B_{L\ep}(p)}\nabla\phi_1^\ep\nabla\phi_2^\ep+\int_{M\setminus B_{L\ep}(p)}\nabla G_1\nabla G_2\\
&-\int_{M\setminus B_{L\ep}(p)}(\nabla G_1\nabla(\eta H_2)+\nabla G_2\nabla(\eta H_1))+\int_{M\setminus B_{L\ep}(p)}\nabla(\eta H_1)\nabla(\eta H_2)\\
=&-\frac{1}{2}\int_{B_L(0)}|\nabla_{\mathbb{R}^2} w|^2+\pi(\lambda_1(p)\lambda_2(p)+\nu_1(p)\nu_2(p))(L\ep)^2\\
 &-4\pi(\rho_2-2\pi)(L\ep)^2+2\pi(4\pi-\frac{\rho_2}{2})(L\ep)^2\\
 &+\int_{M\setminus B_{L\ep}(p)}\nabla G_1\nabla G_2+O((L\ep)^4).
\end{align*}
Noticing that
\begin{align*}
 &\int_{M\setminus B_{L\ep}(p)}\left(|\nabla G_1|^2+|\nabla G_2|^2+\nabla G_1\nabla G_2\right)\nonumber\\
=&\int_{M\setminus B_{L\ep}(p)}\left(|\nabla G_1|^2+|\nabla G_2|^2+\frac{\nabla G_1\nabla G_2+\nabla G_2\nabla G_1}{2}\right)\nonumber\\
=&6\pi\int_{B_{L\ep}(p)}G_1+\frac{3}{2}\rho_2\int_{M}G_2(h_2e^{G_2}-1)+\frac{3}{2}\rho_2\int_{B_{L\ep}(p)}G_2\nonumber\\
 &-\int_{\p B_{L\ep}(p)}\left(G_1\frac{\p G_1}{\p n}+G_2\frac{\p G_2}{\p n}+\frac{G_1\frac{\p G_2}{\p n}+G_2\frac{\p G_1}{\p n}}{2}\right)\nonumber\\
 &+O((L\ep)^4\log(L\ep)).
\end{align*}
Calculating directly, we have
\begin{align*}
\int_{B_{L\ep}(p)}G_1 = -4\pi(L\ep)^2\log(L\ep)+2\pi(L\ep)^2+\pi A_1(p)(L\ep)^2+O((L\ep)^4\log(L\ep))
\end{align*}
and
\begin{align*}
\int_{B_{L\ep}(p)}G_2 = 2\pi(L\ep)^2\log(L\ep)-\pi(L\ep)^2+\pi A_2(p)(L\ep)^2+O((L\ep)^4\log(L\ep)).
\end{align*}
For the boundary terms, we use Lemma 5.2 in \cite{LL05} and Lemma \ref{lem-alpha} to calculate. Precisely, we have
\begin{align*}
\int_{\p B_{L\ep}(p)}G_1\frac{\p G_1}{\p n} =& 32\pi\log(L\ep)-4\pi(4\pi-\frac{\rho_2}{2})(L\ep)^2+\pi(\lambda^2_1(p)+\nu_1^2(p))(L\ep)^2\nonumber\\
                                              &-8\pi A_1(p)+2\pi(4\pi-\frac{\rho_2}{2})A_1(p)(L\ep)^2-8\pi(4\pi-\frac{\rho_2}{2})(L\ep)^2\log(L\ep)\nonumber\\
                                              &+O((L\ep)^4\log(L\ep)),\\
\int_{\p B_{L\ep}(p)}G_2\frac{\p G_2}{\p n} =& 8\pi\log(L\ep)+2\pi(\rho_2-2\pi)(L\ep)^2+\pi(\lambda^2_2(p)+\nu_2^2(p))(L\ep)^2\nonumber\\
                                              &+4\pi A_2(p)+4\pi(\rho_2-2\pi)A_2(p)(L\ep)^2+4\pi(\rho_2-2\pi)(L\ep)^2\log(L\ep)\nonumber\\
                                              &+O((L\ep)^4\log(L\ep)),\\
\int_{\p B_{L\ep}(p)}G_1\frac{\p G_2}{\p n} =& -16\pi\log(L\ep)-4\pi(\rho_2-2\pi)(L\ep)^2+\pi(\lambda_1(p)\lambda_2(p)+\nu_1(p)\nu_2(p))(L\ep)^2\nonumber\\
                                              &-8\pi A_2(p)+2\pi(4\pi-\frac{\rho_2}{2})A_2(p)(L\ep)^2+4\pi(\rho_2-2\pi)(L\ep)^2\log(L\ep)\nonumber\\
                                              &+O((L\ep)^4\log(L\ep)),\\
\end{align*}
\begin{align*}
\int_{\p B_{L\ep}(p)}G_2\frac{\p G_1}{\p n} =& -16\pi\log(L\ep)+2\pi(4\pi-\frac{\rho_2}{2})(L\ep)^2+\pi(\lambda_2(p)\lambda_1(p)+\nu_2(p)\nu_1(p))(L\ep)^2\nonumber\\
                                              &+4\pi A_1(p)+2\pi(\rho_2-2\pi)A_1(p)(L\ep)^2-8\pi(\rho_2-2\pi)(L\ep)^2\log(L\ep)\nonumber\\
                                              &+O((L\ep)^4\log(L\ep)).
\end{align*}
Therefore, we obtain that
\begin{align}\label{energy}
 &\frac{1}{3}\int_{M}\left(|\nabla\phi_1^\ep|^2+|\nabla\phi_2^\ep|^2+\nabla\phi_1^\ep\nabla\phi_2^\ep\right)\nonumber\\
=&4\pi\log(1+\pi L^2)-\frac{4\pi^2L^2}{1+\pi L^2}-8\pi\log(L\ep)+2\pi A_1(p)\nonumber\\
 &+\frac{1}{2}\rho_2\int_{M}G_2(h_2e^{G_2}-1)+O((L\ep)^4\log(L\ep)).
\end{align}

Do calculations, we have
\begin{align}\label{mean-1}
\int_{M}\phi_1^\ep=&\ep^2\int_{B_L(0)}we^{\phi(\ep x^1,\ep x^2)}+4\log(L\ep)+2\pi(L\ep)^2\log(1+\pi L^2)\nonumber\\
                   &-2\pi(L\ep)^2-A_1(p)-2\log(1+\pi L^2)+O((L\ep)^4\log(L\ep))
\end{align}
and
\begin{align}\label{mean-2}
\int_{M}\phi_2^\ep=&-\frac{\ep^2}{2}\int_{B_L(0)}we^{\phi(\ep x^1,\ep x^2)}-\pi(L\ep)^2\log(1+\pi L^2)\nonumber\\
                   &+\pi(L\ep)^2+\int_M G_2+O((L\ep)^4\log(L\ep)).
\end{align}
Since
\begin{align*}
\int_{B_L(0)}we^{\phi(\ep x^1,\ep x^2)}
=2\pi L^2-2\log(1+\pi L^2)-2\pi L^2\log(1+\pi L^2)+O(L^2\ep^2\log L),
\end{align*}
we obtain that by instituting this into \eqref{mean-1} and \eqref{mean-2} respectively
\begin{align}\label{mean-1-last}
\int_{M}\phi_1^\ep=&4\log(L\ep)-A_1(p)-2\log(1+\pi L^2)\nonumber\\
                   &-2\ep^2\log(1+\pi L^2)+O((L\ep)^4\log(L\ep))
\end{align}
and
\begin{align}\label{mean-2-last}
\int_{M}\phi_2^\ep=\ep^2\log(1+\pi L^2)+\int_M G_2+O((L\ep)^4\log(L\ep)).
\end{align}

Denoting $\mathcal{M}=\frac{1}{\pi}(-\frac{K(p)}{2}+\frac{(b_1(p)+\lambda_1(p))^2+(b_2(p)+\nu_1(p))^2}{4})$ and using $\alpha_1(p)+\beta_1(p)=4\pi-\frac{\rho_2}{2}$, we have
\begin{align}\label{ephi-in}
\int_{B_{L\ep}(p)}e^{\phi_1^\ep}=\ep^2\left(1-\frac{1}{1+\pi L^2}+\mathcal{M}\ep^2\log(1+\pi L^2)+O(\ep^2)+O(\ep^3\log L)\right),
\end{align}
\begin{align}\label{ephi-neck}
\int_{B_{\delta}(p)\setminus B_{L\ep}(p)}e^{\phi_1^\ep}=\ep^2\left(\frac{\pi L^2}{(1+\pi L^2)^2}-(\mathcal{M}+\frac{4\pi-\frac{\rho_2}{2}}{2\pi})\ep^2\log(L\ep)^2\right.\nonumber\\
\left.+O(\ep^2)+O(\frac{1}{L^4})\right),
\end{align}
and
\begin{align}\label{ephi-out}
\int_{M\setminus B_{\delta}(p)}e^{\phi_1^\ep}=O(\ep^4).
\end{align}
By combining \eqref{ephi-in}, \eqref{ephi-neck} and \eqref{ephi-out}, one has
\begin{align}\label{ephi}
\int_Me^{\phi_1^\ep} = \ep^2\left(1+\mathcal{M}\ep^2\log(1+\pi L^2)-(\mathcal{M}+\frac{4\pi-\frac{\rho_2}{2}}{2\pi})\ep^2\log(L\ep)^2\right.\nonumber\\
\left.+O(\ep^2)+O(\frac{1}{L^4})+O(\ep^3\log L)\right).
\end{align}

Suppose that in $B_\delta(p)$
\begin{align*}
h_1(x)-h_1(p)=&k_1r\cos\theta+k_2r\sin\theta\\
&+k_3r^2\cos^2\theta+2k_4\cos\theta\sin\theta+k_5r^2\sin^2\theta+O(r^3).
\end{align*}
It follows from a simple computation that
\begin{align}\label{hephi-in}
&\int_{B_{L\ep}(p)}(h_1-h_1(p))e^{\phi_1^\ep}\nonumber\\
 = &\frac{1}{2\pi}[k_3+k_5+k_1(b_1+\lambda_1)+k_2(b_2+\nu_1)]\ep^4\log(1+\pi L^2)+O(\ep^4),
\end{align}
\begin{align}\label{hephi-neck}
&\int_{B_{\delta}(p)\setminus B_{L\ep}(p)}(h_1-h_1(p))e^{\phi_1^\ep}\nonumber\\
 = &-\frac{1}{2\pi}[k_3+k_5+k_1(b_1+\lambda_1)+k_2(b_2+\nu_1)]\ep^4\log(L\ep)^2+O(\ep^4),
\end{align}
and
\begin{align}\label{hephi-out}
\int_{M\setminus B_{\delta}(p)}(h_1-h_1(p))e^{\phi_1^\ep}=O(\ep^4).
\end{align}
By \eqref{ephi}, \eqref{hephi-in}, \eqref{hephi-neck} and \eqref{hephi-out}, we know that
\begin{align*}
 &\int_Mh_1e^{\phi_1^\ep}=h_1(p)\int_Me^{\phi_1^\ep}+\int_M(h_1-h_1(p))e^{\phi_1^\ep}\nonumber\\
=&h_1(p)\ep^2\left(1+\mathcal{M}\ep^2\log(1+\pi L^2)-(\mathcal{M}+\frac{4\pi-\frac{\rho_2}{2}}{2\pi})\ep^2\log(L\ep)^2\right)\nonumber\\
 &+\frac{1}{2\pi}[k_3+k_5+k_1(b_1+\lambda_1)+k_2(b_2+\nu_1)]\ep^4\log(1+\pi L^2)\nonumber\\
 &-\frac{1}{2\pi}[k_3+k_5+k_1(b_1+\lambda_1)+k_2(b_2+\nu_1)]\ep^4\log(L\ep)^2\nonumber\\
 &+O(\ep^4)+O(\frac{\ep^2}{L^4})+O(\ep^5\log L).
\end{align*}
Then we have
\begin{align}\label{loghphi-1}
 &\log\int_Mh_1e^{\phi_1^\ep}\nonumber\\
=&\log h_1(p)+\log\ep^2\nonumber\\
&+\mathcal{M}\ep^2\log(1+\pi L^2)-(\mathcal{M}+\frac{4\pi-\frac{\rho_2}{2}}{2\pi})\ep^2\log(L\ep)^2\nonumber\\
 &+\frac{1}{2\pi h_1(p)}[k_3+k_5+k_1(b_1+\lambda_1)+k_2(b_2+\nu_1)]\ep^2\log(1+\pi L^2)\nonumber\\
 &-\frac{1}{2\pi h_1(p)}[k_3+k_5+k_1(b_1+\lambda_1)+k_2(b_2+\nu_1)]\ep^2\log(L\ep)^2\nonumber\\
 &+O(\ep^2)+O(\frac{1}{L^4}).
\end{align}
Direct calculation shows that
\begin{align*}
\int_{B_{2L\ep}(p)}e^{\phi_2^\ep}=O((L\ep)^4), ~~\int_{B_{2L\ep}(p)}e^{G_2}=O((L\ep)^4).
\end{align*}
Since $\int_{M}h_2e^{G_2}=1$, we obtain that
\begin{align}\label{loghphi-2}
\log\int_Mh_2e^{\phi_2^\ep}
=\log\left(1-O((L\ep)^4)\right)=O((L\ep)^4).
\end{align}
Taking \eqref{energy}, \eqref{mean-1-last}, \eqref{mean-2-last}, \eqref{loghphi-1} and \eqref{loghphi-2} into the functional, we obtain that
\begin{align*}
J_{4\pi,\rho_2}(\phi_1^\ep,\phi_2^\ep)
=&-4\pi-4\pi\log\pi-4\pi\log h_1(p)-2\pi A_1(p)+\frac{\rho_2}{2}\int_{M}G_2(h_2e^{G_2}+1)\nonumber\\
 &-4\pi[\mathcal{M}+\frac{4\pi-\frac{\rho_2}{2}}{2\pi}+\frac{k_3+k_5+k_1(b_1+\lambda_1)+k_2(b_2+\nu_1)}{2\pi h_1(p)}]\nonumber\\
 &~~~~\times\ep^2[\log(1+\pi L^2)-\log(L\ep)^2]\nonumber\\
 &+O(\ep^2)+O(\frac{1}{L^4})+O((L\ep)^4\log(L\ep))+O(\ep^3\log L).
\end{align*}
Note that under the assumption \eqref{zhu-cond-1}, we have
\begin{align*}
\mathcal{N}:=&\mathcal{M}+\frac{4\pi-\frac{\rho_2}{2}}{2\pi}+\frac{k_3+k_5+k_1(b_1+\lambda_1)+k_2(b_2+\nu_1)}{2\pi h_1(p)}\nonumber\\
=&-\frac{K(p)}{2\pi}+\frac{(b_1+\lambda_1)^2+(b_2+\mu_1)^2}{4\pi}+\frac{4\pi-\frac{\rho_2}{2}}{2\pi}+\frac{\frac{1}{2}\Delta h_1(p)+k_1(b_1+\lambda_1)+k_2(b_2+\nu_1)}{2\pi h_1(p)}\nonumber\\
=&\frac{1}{4\pi}\left[\Delta\log h_1(p)+8\pi-\rho_2-2K(p)\right]+\frac{1}{4\pi}\left[(b_1+\lambda_1+k_1)^2+(b_2+\nu_1+k_2)^2\right]\nonumber\\
>&0,
\end{align*}
where we have used $\Delta h_1(p)=\frac{1}{2}(k_3+k_5)$ and $\nabla h_1(p)=(k_1,k_2)$.

By choosing $L^4\ep^2=\frac{1}{\log(-\log\ep)}$, we have
\begin{align*}
 J_{4\pi,\rho_2}(\phi_1^\ep,\phi_2^\ep)
=&-4\pi-4\pi\log\pi-4\pi\log h_1(p)-2\pi A_1(p)+\frac{\rho_2}{2}\int_{M}G_2(h_2e^{G_2}+1)\nonumber\\
 &-4\pi\mathcal{N}\ep^2(-\log\ep^2)+o(\ep^2(-\log\ep^2)).
\end{align*}
Since $\mathcal{N}>0$, we have for sufficiently small $\ep$ that
\begin{align*}
 J_{4\pi,\rho_2}(\phi_1^\ep,\phi_2^\ep)
<-4\pi-4\pi\log\pi-4\pi\log h_1(p)-2\pi A_1(p)+\frac{\rho_2}{2}\int_{M}G_2(h_2e^{G_2}+1).
\end{align*}
This finishes the proof of Theorem \ref{thm-zhu-1}. $\hfill{\square}$

\vspace{1cm}

\noindent\textbf{Data Availability} Data sharing is not applicable to this article as obviously no datasets were generated or
analyzed during the current study.

\noindent\textbf{Conflict of interest} The authors have no Conflict of interest to declare that are relevant to the content of this
article.

\end{document}